\newif\ifdraft\draftfalse
\newif\ifcompact\compactfalse
   \def\draft#1{\red{(\textbf{Note: }\brown{#1})}}
   \def\draftt#1{\textbf{\red{Note: (草稿解释图片)}}#1}
    \renewenvironment{proof}[1][Proof]
    	{\par\noindent\textit{#1}.\,}{$\hfill\Box$}
        \def\draft#1{\relax}\def\draftt#1{\relax}
 \numberwithin{equation}{section}
\newtheorem{prop}{Proposition}[section]
\newtheorem{lem}[prop]{Lemma}
\newtheorem{defi}{Definition}[section]
\newtheorem{cor}[prop]{Corollary}
\newtheorem{thm}[prop]{Theorem}
\newtheorem{exam}[prop]{Example}
\newtheorem{rem}[prop]{Remark}
\begin{document}
\baselineskip=17pt

\title[Existence of $L^q$-dimension and entropy dimension]
{Existence of $L^q$-dimension and entropy dimension of self-conformal measures on Riemannian manifolds}
\date{\today}
\author[S.-M. Ngai]{Sze-Man Ngai}
\address{Key Laboratory of High Performance Computing and Stochastic Information
		Processing (HPCSIP) (Ministry of Education of China), College of Mathematics and Statistics, Hunan Normal University,
	Changsha, Hunan 410081, China, and Department of Mathematical Sciences\\ Georgia Southern
	University\\ Statesboro, GA 30460-8093, USA.}
\email{smngai@georgiasouthern.edu}
\author[Y. Xu]{Yangyang Xu*}
\address{Key Laboratory of High Performance Computing and Stochastic Information
		Processing (HPCSIP) (Ministry of Education of China), College of Mathematics and Statistics, Hunan Normal University,
	Changsha, Hunan 410081, China.} \email{yyxumath@163.com}

\subjclass[2010]{Primary: 28A80}
\keywords{Fractal; self-conformal measure; Riemannian manifold; $L^q$-spectrum; entropy dimension.}
\thanks{* Corresponding author.}
\thanks{The authors are supported in part by the National Natural Science Foundation of China, grant 11771136, and Construct Program of the Key Discipline in Hunan Province. The first author is also supported in part by a Faculty Research Scholarly Pursuit Funding from Georgia Southern University.}

\begin{abstract} Peres and Solomyak proved that on $\mathbb R^n$, the limits defining the $L^q$-dimension for any $q\in(0,\infty)\setminus\{1\}$, and the entropy dimension of a self-conformal measure exist, without assuming any separation condition. By introducing the notions of heavy maximal packings and partitions, we prove that on a doubling metric space the $L^q$-dimension, $q\in(0,\infty)\setminus\{1\}$, is equivalent to the generalized dimension. We also generalize the result on the existence of the $L^q$-dimension to self-conformal measures on complete Riemannian manifolds with the doubling property. In particular, these results hold for complete Riemannian manifolds with nonnegative Ricci curvature. Moreover, by assuming that the measure is doubling, we extend the result on the existence of the entropy dimension to self-conformal measures on complete Riemannian manifolds.
\end{abstract}

\maketitle

\section{\bf Introduction \label{Intro}}

Let $\mu$ be a finite positive Borel measure on a metric space with compact support. We call a collection $\mathcal{B}_{\delta}$ of disjoint closed $\delta$-balls with centers in ${\rm supp}(\mu)$ a \textit{$\delta$-packing} of ${\rm supp}(\mu)$. The \textit{lower $L^q$-spectrum}  of $\mu$ is defined for all $q\in\mathbb R$ as
\begin{equation}\label{eq:tau_def}
\underline{\tau}(q):=\varliminf_{\delta\rightarrow0^{+}}\frac{\log \mathcal{S}_{\delta}(q)}{\log\delta},
\end{equation}
where $\mathcal{S}_{\delta}(q):=\sup\sum_{B\in\mathcal{B}_{\delta}}\mu(B)^{q}$ and the supremum is taken over all $\delta$-packings of
${\rm supp}(\mu)$. The \textit{upper $L^q$-spectrum} of $\mu$ is defined analogously by replacing $\varliminf$ with $\varlimsup$. If the limit exists, we denote the common value by $\tau(q)$.
For $q\ne 1$, the \textit{lower and upper $L^q$-dimensions} of $\mu$ are defined as
\begin{equation}\label{eq:qdim_def}
\underline{\dim}_{q}(\mu):=\frac{\underline{\tau}(q)}{q-1}\quad\text{and}\quad\overline{\dim}_{q}(\mu):=\frac{\overline{\tau}(q)}{q-1},
\end{equation}
respectively (see, e.g., \cite{Strichartz_1993, Olsen_1995, Lau-Ngai_1999}).

The $L^q$-spectrum of a measure plays an important role in the multifractal formalism, which is a heuristic principle asserting that the Hausdorff dimension of the multifractal component consisting of points in ${\rm supp}(\mu)$ with local dimension being $\alpha$ is equal to the Legendre transform of $\tau(q)$, i.e.,
$$
\dim_{\rm H}\left(\left\{x\in {\rm supp}(\mu):\lim_{\delta\to 0^+}\frac{\log\mu(B_\delta(x))}{\log\delta}=\alpha\right\}\right)=\tau^*(\alpha),
$$
where we recall that $\tau^*(\alpha):=\inf\{\alpha q-\tau(q):q\in\mathbb R\}$.

For $\delta>0$, let $\mathcal{P}$ be a finite Borel $\delta$-partition of ${\rm supp}(\mu)$, i.e., the diameter of any element in $\mathcal{P}$ is at most $\delta$. Define
$$h(\mu,\mathcal{P}):=-\sum_{P\in\mathcal{P}}\mu(P)\log\mu(P).$$
Let
$$h(\mu,\delta):=\inf\{h(\mu,\mathcal{P}):\mathcal{P}~\text{is a finite Borel}~\delta\text{-partition of}~{\rm supp}(\mu)\}.$$
The \textit{entropy dimension} of $\mu$ is defined as
\begin{equation}\label{eq:edim_def}
\dim_{e}(\mu):=\lim_{\delta\rightarrow0^{+}}\frac{h(\mu,\delta)}{-\log\delta},
\end{equation}
if the limit exists.

Let $\mu$ be a finite positive Borel measure on a metric space with compact support $K$. The $L^q$-dimension (also called \textit{generalized dimension} in \cite{Patzschke_1997}) is defined by Hentschel and Procaccia \cite{Hentschel-Procaccia_1983} (see also \cite{Strichartz_1993}) as
\begin{equation}\label{eq:GD_def}
\lim_{\delta\rightarrow0^{+}}\frac{\log\int_{K}\mu(B_{\delta}(x))^{q-1}\,d\mu}{(q-1)\log\delta}, \quad q\ne 1,
\end{equation}
if the limit exists. For metric spaces with the doubling property, Guysinsky and Yaskolko \cite{Guysinsky-Yaskolko_1997} proved that for $q>1$, the definition in
(\ref{eq:GD_def}) is equivalent to the Renyi dimension, which is formulated by using the so-called grid partitions (see definition in Section 4).
For $0<q<1$ and measures on $\mathbb{R}^{n}$, Barbaroux \textit{et al.} \cite{Barbaroux-Germinet-Tcheremchantsev_2001} proved that the definitions of the $L^{q}$-dimension in (\ref{eq:qdim_def}) and (\ref{eq:GD_def}) are equivalent.
Moreover, for complete metric spaces and $q<0$, Germinet and Tcheremchantsev \cite{Germinet-Tcheremchantsev_2006} considered the equivalence of the definitions of the $L^{q}$-dimension in (\ref{eq:qdim_def}) and (\ref{eq:GD_def}).
\par We say that a metric space $X$ is \textit{doubling} if there exists $N_{0}\in\mathbb{N}$ such that any $2r$-ball can be covered by a union of at most $N_{0}$ balls of radius $r$. A measure $\mu$ is called \textit{doubling} on $X$ if there exists a constant $C\geq1$ called a \textit{doubling constant} such that for any $x\in X$ and $r>0$,
$$\mu(B_{2r}(x))\leq C\mu(B_{r}(x)).$$
In particular, for any $R\geq r>0$,
\begin{equation}\label{eq(1.1)}
\mu(B_{R}(x))\leq C^{\log_{2}(\frac{R}{r})}\mu(B_{r}(x)).
\end{equation}
Moreover, each metric space that carries a doubling measure must be doubling (see Proposition \ref{A_1}).

In this paper, the notions of heavy maximal packings and partitions (see definitions in Section \ref{sect.2}) will play an important role.
Roughly speaking, given a measure $\mu$ with compact support, we can use heavy maximal $\delta$-packings to define $\tau(q)$. Moreover, we can use a heavy maximal $\delta$-packing to construct a cover of the self-conformal set in such a way that each member of the cover corresponds to a member of the heavy maximal $\delta$-packing that has comparable $\mu$ measure.

We first show that for a doubling metric space, the expression in (\ref{eq:GD_def}) is equivalent to the $L^{q}$-dimension defined by (\ref{eq:qdim_def}) for any $q\in(0,\infty)\setminus\{1\}$. We obtain the equivalence for $q>1$ by showing that (\ref{eq:tau_def}) is equivalent to the Renyi dimension defined in \cite{Guysinsky-Yaskolko_1997}. The case $0<q<1$ is obtained by generalizing a result in \cite{Barbaroux-Germinet-Tcheremchantsev_2001}.

\begin{thm}\label{thm(1.0)}
Let $X$ be a doubling metric space. Assume that $\mu$ is a finite positive Borel measure on $X$ with compact support $K$. Then for any $q\in(0,\infty)\setminus\{1\}$, the definitions of the $L^{q}$-dimension in (\ref{eq:qdim_def}) and (\ref{eq:GD_def}) are equivalent.
\end{thm}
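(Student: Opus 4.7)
The proof naturally splits into the cases $q>1$ and $0<q<1$, since the sign of $q-1$ reverses the direction in which the one-sided comparisons point. In both cases my strategy is to set up a two-sided, $\delta$-independent comparison between $\mathcal{S}_\delta(q)$, an appropriate partition sum $\sum_P\mu(P)^q$, and $\int_K\mu(B_{\delta}(x))^{q-1}\,d\mu$; once the multiplicative constants are independent of $\delta$, taking logarithms and dividing by $(q-1)\log\delta$ makes all three quantities yield the same limit.

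\textbf{Case $q>1$.} I plan to insert the Renyi dimension, defined via finite grid partitions of $K$, as an intermediate object. Guysinsky--Yaskolko have already identified the Renyi dimension with the generalized dimension (\ref{eq:GD_def}) in the doubling setting, so it suffices to compare the packing quantity $\mathcal{S}_\delta(q)$ with the Renyi sum $\sum_{P\in\mathcal{P}_\delta}\mu(P)^q$. In each direction the doubling constant $N_{0}$ bounds how many grid cells a single $\delta$-ball can intersect and vice versa; combining this bounded-overlap fact with the convexity of $t\mapsto t^{q}$ (valid since $q>1$) yields
\begin{equation*}
c_1\sum_{P\in\mathcal{P}_\delta}\mu(P)^q\ \le\ \mathcal{S}_\delta(q)\ \le\ c_2\sum_{P\in\mathcal{P}_\delta}\mu(P)^q,
\end{equation*}
with constants $c_1,c_2$ depending only on $q$ and $N_0$. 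Heavy maximal packings enter here as the natural way to match a grid cell with a packing ball of comparable $\mu$-measure.

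\textbf{Case $0<q<1$.} Here I would compare $\mathcal{S}_\delta(q)$ directly with $\int_K\mu(B_\delta(x))^{q-1}\,d\mu$, generalizing the argument of Barbaroux--Germinet--Tcheremchantsev. Dyadic cubes in $\mathbb{R}^n$ will be replaced by a heavy maximal $\delta$-packing $\{B_\delta(x_i)\}$ with associated partition $\{P_i\}$ satisfying $B_\delta(x_i)\subset P_i$ and, by the heavy condition, $\mu(P_i)\asymp\mu(B_\delta(x_i))$. For $x\in P_i$ one has $P_i\subset B_{C\delta}(x)$, so $\mu(B_{C\delta}(x))\ge\mu(P_i)$; since $q-1<0$ this reverses to $\mu(B_{C\delta}(x))^{q-1}\le\mu(P_i)^{q-1}$, and integrating over $P_i$ and summing gives
\begin{equation*}
\int_K\mu(B_{C\delta}(x))^{q-1}\,d\mu\ \le\ \sum_i\mu(P_i)^q\ \lesssim\ \sum_i\mu(B_\delta(x_i))^q\ \le\ \mathcal{S}_{C\delta}(q).
\end{equation*}
For the reverse bound, the doubling of $X$ shows that $B_\delta(x)\cap K$ is covered by a bounded number of cells $P_{i_1},\dots,P_{i_N}$, and the heavy condition forces the measures of neighbouring cells to be comparable, yielding $\mu(B_\delta(x))\le C''\mu(P_i)$ for $x\in P_i$; integrating this against $d\mu$ gives $\sum_i\mu(B_\delta(x_i))^q\lesssim\int_K\mu(B_\delta(x))^{q-1}\,d\mu$. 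Taking suprema over heavy maximal packings and passing to the logarithmic limit delivers the claimed equivalence.

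\textbf{Main obstacle.} The delicate step is the case $0<q<1$: the two-sided comparability $\mu(B_\delta(x))\asymp\mu(P_i)$ for $x\in P_i$ would be automatic if $\mu$ itself were doubling, but the hypotheses only give doubling of the metric space $X$. This is precisely the role of the heavy maximal packings and partitions introduced in Section~\ref{sect.2}: they manufacture by construction the comparability between a packing ball and its companion partition cell, substituting for the missing doubling of $\mu$, while the doubling of $X$ continues to supply the bounded-overlap estimates needed in both cases. Once this dictionary is established, the remaining work is routine sum--integral manipulation.
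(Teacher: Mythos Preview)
Your plan for $q>1$ matches the paper's: Proposition~\ref{prop(4.1)} shows the Renyi dimension equals the $L^{q}$-dimension~(\ref{eq:qdim_def}) via the good-cover machinery of Lemma~\ref{lem(2.3)}, and Guysinsky--Yaskolko supply the link to~(\ref{eq:GD_def}). Likewise, for $0<q<1$ your upper bound
\[
\int_K\mu(B_{C\delta}(x))^{q-1}\,d\mu\ \le\ \sum_i\mu(P_i)^{q}\ \lesssim\ \sum_i\mu(B_\delta(x_i))^{q}
\]
is exactly (\ref{eq(4.1)}) and uses only $P_i\subset B_{C\delta}(x)$ together with Proposition~\ref{prop(2.1)}.

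The gap is in your reverse bound for $0<q<1$. You assert that ``the heavy condition forces the measures of neighbouring cells to be comparable, yielding $\mu(B_\delta(x))\le C''\mu(P_i)$ for $x\in P_i$''. This is not what the heavy construction delivers. Definition~\ref{defi(2.1)} and Proposition~\ref{prop(2.1)} give comparability between a cell $P_i$ and \emph{its own} packing ball $B_\delta(x_i)$, but say nothing about $\mu(P_{i'})$ versus $\mu(P_i)$ for a neighbouring cell $P_{i'}$ chosen \emph{earlier} in the greedy process (smaller index $i'$): such a cell can have arbitrarily larger measure, since $\mu$ is not assumed doubling. Consequently the pointwise inequality $\mu(B_\delta(x))\lesssim\mu(P_i)$ for all $x\in P_i$ is unavailable.

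The paper avoids this by a good-set/bad-set split rather than a pointwise comparison. Writing $\tilde{\mathcal E}^j$ for the union of cells neighbouring $\mathcal E^j$, one sets
\[
I=\Big\{j:\mu(\tilde{\mathcal E}^j)<L\,\mu(\mathcal E^j)\Big\},\qquad J=\Big\{j:\mu(\mathcal E^j)\le L^{-1}\mu(\tilde{\mathcal E}^j)\Big\},
\]
with $L$ chosen from the doubling constant of $X$. On $I$ the desired bound $\mu(B_\delta(x))\le(1+L)\mu(\mathcal E^j)$ does hold; for $J$, subadditivity of $t\mapsto t^{q}$ (Lemma~\ref{lem(2.2)}) and the bounded-overlap count show $\sum_{j\in J}\mu(\mathcal E^j)^{q}\le\tfrac12\sum_j\mu(\mathcal E^j)^{q}$, so the good set $I$ already carries at least half of the total $q$-sum. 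This combinatorial absorption argument is the missing idea in your reverse inequality; once you insert it, the rest of your outline goes through.
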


Assume that $\{S_{i}\}_{i=1}^{\ell}$ is a conformal iterated function system (CIFS) (see definition in Section \ref{sect.3}). Let $K=\bigcup_{i=1}^{\ell}S_{i}(K)$ be the self-conformal set, and $\mu=\sum_{i=1}^{\ell}p_{i}\mu\circ S_{i}^{-1}$ be the self-conformal measure with $K={\rm supp}(\mu)$, where $(p_{1},\dots,p_{\ell})$ is a probability vector.
Peres and Solomyak \cite{Peres-Solomyak_2000} proved that for any self-conformal measure on $\mathbb R^n$, the limits defining the $L^q$-dimension for any $q\in(0,\infty)\setminus\{1\}$, and the entropy dimension exist, without assuming any separation condition. The purpose of this paper is to obtain similar results for self-conformal measures on Riemannian manifolds.
Self-conformal measures on Riemannian manifolds have been studied by Patzschke \cite{Patzschke_1997}. By assuming the open set condition (OSC), Patzschke proved that the multifractal formalism holds for a self-conformal measure defined by a CIFS of $C^{1+\gamma}$ diffeomorphisms on a Riemannian manifold.

Our second objective in this paper is to generalize the result on the existence of the $L^q$-dimension in \cite{Peres-Solomyak_2000} to complete smooth Riemannian manifolds with the doubling property. We mention that we do not assume that the measure is doubling. In the proof of \cite{Peres-Solomyak_2000}, $\mathbb R^n$ is partitioned into $2^{-t}$-mesh cubes of the form
$$\mathcal P_t=\Bigg\{\prod_{i=1}^n[k_i 2^{-t},(k_i+1)2^{-t}),\quad k_i\in\mathbb Z \Bigg\},\quad t\in\mathbb N.$$
Any set of diameter less than $2^{-t}$ can intersect no more than $3^n$ members of $\mathcal P_t$. For Riemannian manifolds, such a partition is not possible. We use some properties equivalent to the doubling property (see Lemma \ref{lem(2.1)}) to deal with this problem.

\begin{thm}\label{thm(1.1)}
Let $M$ be a complete $n$-dimensional smooth Riemannian manifold with the doubling property. Assume that $\mu$ is a self-conformal measure on $M$ with compact support $K$. Then for $q>0$, the limit defining the $L^{q}$-spectrum in (\ref{eq:tau_def}) exists. In particular, for any $q\in(0,\infty)\setminus\{1\}$, the $L^q$-dimension defined by (\ref{eq:qdim_def}) exists.
\end{thm}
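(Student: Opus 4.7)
The plan is to adapt the Peres--Solomyak argument from $\mathbb R^n$ to the Riemannian setting. The key substitution for the dyadic-cube partition of $\mathbb R^n$ used in \cite{Peres-Solomyak_2000} is a combination of (i) the self-conformal recursion $\mu=\sum_i p_i\,\mu\circ S_i^{-1}$, (ii) bounded distortion for the $C^{1+\gamma}$ conformal maps $S_\mathbf{i}$ on $M$, and (iii) the doubling geometry of $M$ via the heavy maximal packings introduced in Section~\ref{sect.2}.

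First I would iterate self-conformality to obtain $\mu=\sum_{|\mathbf{i}|=n}p_{\mathbf{i}}\,\mu\circ S_{\mathbf{i}}^{-1}$, and for each $\rho>0$ produce a stopping set $\Lambda(\rho)$ of cylinders of diameter $\asymp\rho$ with $\sum_{\mathbf{i}\in\Lambda(\rho)}p_\mathbf{i}=1$ and with the distortion of each $S_\mathbf{i}$ controlled uniformly in $\mathbf{i}$. Then, for fixed $q>0$ with $q\ne 1$ and small $\delta,\rho>0$, the target is a two-sided near-multiplicative estimate of the form
\[
c_1\sum_{\mathbf{i}\in\Lambda(\rho)}p_\mathbf{i}^{q}\,\mathcal{S}_{C_1\delta}(q)\;\leq\;\mathcal{S}_{\delta\rho}(q)\;\leq\;C_2\sum_{\mathbf{i}\in\Lambda(\rho)}p_\mathbf{i}^{q}\,\mathcal{S}_{c_2\delta}(q),
\]
with constants depending only on the doubling constant of $M$, the conformal distortion, and $q$. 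Taking logarithms, this is a Fekete-type relation for $a(t):=\log \mathcal{S}_{e^{-t}}(q)$, from which the existence of $\lim_{t\to\infty}a(t)/t$ (and hence of the limit in~(\ref{eq:tau_def})) is standard; the $L^q$-dimension conclusion then follows from definition~(\ref{eq:qdim_def}).

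The main obstacle is the two-sided near-multiplicative estimate itself. In $\mathbb R^n$ the dyadic mesh is disjoint and each set of diameter $<2^{-t}$ meets at most $3^n$ cells; on $M$ neither property is available. For the upper bound I would start from a heavy maximal $\delta\rho$-packing, assign each ball to the cylinder of $\Lambda(\rho)$ containing its center, and pull back by $S_\mathbf{i}^{-1}$, using the doubling bound (via Lemma~\ref{lem(2.1)}) to control how many cylinders a single ball can meet, and bounded distortion to control how ball radii transform. For the lower bound I would take heavy maximal $\delta$-packings inside each normalized cylinder and push them forward by $S_\mathbf{i}$; the heaviness property guarantees that the weights $\mu(B)^q$ are not destroyed by re-centering or the distortion-induced radius shift. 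For $0<q<1$ the monotonicity of $x\mapsto x^q$ reverses, which complicates the direct packing argument; here Theorem~\ref{thm(1.0)} would be invoked to pass to the integral formulation~(\ref{eq:GD_def}), on which the self-conformal decomposition yields the required near-multiplicative inequality directly.
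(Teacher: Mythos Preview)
Your overall plan---Fekete-type sub/super-multiplicativity for $\log\mathcal S_\delta(q)$ via the self-conformal recursion and bounded distortion---is the right one, and it is also the paper's plan. But the specific target inequality you write down,
\[
c_1\!\!\sum_{\mathbf i\in\Lambda(\rho)}\!p_{\mathbf i}^{q}\,\mathcal S_{C_1\delta}(q)\ \le\ \mathcal S_{\delta\rho}(q)\ \le\ C_2\!\!\sum_{\mathbf i\in\Lambda(\rho)}\!p_{\mathbf i}^{q}\,\mathcal S_{c_2\delta}(q),
\]
is the OSC version and is not attainable without a separation condition. With overlaps a $\delta\rho$-ball $B$ may receive contributions from an \emph{unbounded} number of cylinders in $\Lambda(\rho)$; the doubling property of $M$ bounds how many \emph{disjoint} balls fit in a larger ball, but says nothing about how many overlapping cylinders can meet $B$. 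Consequently ``assign each ball to the cylinder containing its center'' is ill-defined, and Jensen for $q>1$ gives only $\mu(B)^q\le\sum_{\mathbf i}p_{\mathbf i}\,\mu(S_{\mathbf i}^{-1}B)^q$, with $p_{\mathbf i}$ not raised to the $q$-th power. The paper's correct substitute is to aggregate: for each ball $B$ at scale $2^{-t}$ set $P_+(B):=\sum_{u\in\mathcal W_t:\,K_u\cap 2*B\ne\emptyset}p_u$, apply Jensen with weights $p_u/P_+(B)$ to obtain a factor $P_+(B)^q$, and then bound $\sum_B P_+(B)^q$ by $\mathcal S_{2^{-t}}(q)$ via $P_+(B)\le\mu(\tfrac52*B)$ and the heavy-packing machinery. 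This yields directly $\mathcal S_{2^{-s-t}}(q)\le L\,\mathcal S_{2^{-s}}(q)\,\mathcal S_{2^{-t}}(q)$, bypassing $\sum p_u^q$ entirely.

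For $0<q<1$ your plan to invoke Theorem~\ref{thm(1.0)} and work with the integral~(\ref{eq:GD_def}) is a detour that does not obviously close: the self-conformal decomposition of $\int\mu(B_{\delta\rho}(x))^{q-1}\,d\mu$ still faces the same overlap problem inside the integrand, and Theorem~\ref{thm(1.0)} is only an equivalence of liminf/limsup, not an analytic tool. The paper stays with packings and introduces the key device you are missing: for each $u\in\mathcal W_t$ choose a \emph{$q$-heavy} ball $H(u)\in 2\mathcal B_{2^{-t}}$ maximizing $\sum_{j:B\cap E^j\ne\emptyset}\mu(S_u^{-1}E^j)^q$, set $P_-(B):=\sum_{u:H(u)=B}p_u$, and use concavity of $x^q$ together with Lemma~\ref{lem(2.4)} to get the reverse inequality $\mathcal S_{2^{-s-t}}(q)\ge L^{-1}\mathcal S_{2^{-s}}(q)\mathcal S_{2^{-t}}(q)$. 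Both the $P_\pm(B)$ aggregation and the $q$-heavy selection are essential in the absence of separation; your proposal does not contain an analogue of either.
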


It follows from the Bishop-Gromov comparison theorem that a complete $n$-dimensional Riemannian manifold with nonnegative Ricci curvature has the doubling property (see, e.g., \cite{Berger_2003}). In fact, any $2r$-ball can be covered by no more than $4^n$ balls of radius $r$. Thus, the following corollary is a direct consequence of this fact, Theorem~\ref{thm(1.0)} and Theorem \ref{thm(1.1)}.

\begin{cor}
Let $M$ be a complete $n$-dimensional smooth Riemannian manifold with nonnegative Ricci curvature. Then the conclusions of Theorem \ref{thm(1.0)} and Theorem \ref{thm(1.1)} hold respectively.
\end{cor}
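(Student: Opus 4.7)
The plan is a one-step reduction: I would verify that $M$ is a doubling metric space and then invoke Theorems~\ref{thm(1.0)} and~\ref{thm(1.1)} directly. Since both of those theorems take doubling of the ambient space as their only structural hypothesis beyond completeness/smoothness and the self-conformal setup of $\mu$, once doubling is established the Corollary follows without any further analysis of the measure.

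For the doubling property I would invoke the Bishop-Gromov volume comparison theorem: on a complete $n$-dimensional Riemannian manifold with $\mathrm{Ric}\ge 0$, the quantity $\mathrm{vol}(B_r(x))/r^n$ is nonincreasing in $r$, so in particular $\mathrm{vol}(B_{2r}(x)) \le 2^n\,\mathrm{vol}(B_r(x))$ for every $x\in M$ and $r>0$. To turn this volumetric statement into the combinatorial covering bound demanded by the definition of a doubling metric space in Section~\ref{Intro}, I would apply a standard packing-to-covering argument: choose a maximal disjoint family $\{B_{r/2}(x_i)\}$ with centers $x_i\in B_{2r}(x)$, so that maximality forces $\{B_r(x_i)\}$ to cover $B_{2r}(x)$; since the disjoint half-radius balls $B_{r/2}(x_i)$ all lie in a common ball of radius comparable to $r$ centered at any fixed $x_{i_0}$, Bishop-Gromov caps the number of indices by a constant $N_0=N_0(n)$, which the excerpt records as $4^n$ via \cite{Berger_2003}.

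With $M$ now recognized as a doubling metric space, Theorem~\ref{thm(1.0)} immediately supplies the equivalence of the two $L^q$-dimension definitions (\ref{eq:qdim_def}) and (\ref{eq:GD_def}) for every $q\in(0,\infty)\setminus\{1\}$, and Theorem~\ref{thm(1.1)} supplies the existence of the $L^q$-spectrum for every $q>0$ and hence of the $L^q$-dimension in the same range. No genuine obstacle is expected; the argument is the chain \emph{curvature} $\Rightarrow$ \emph{volume comparison} $\Rightarrow$ \emph{metric doubling} $\Rightarrow$ \emph{the previous two theorems}. The only point that warrants care is extracting Bishop-Gromov in its metric, covering-number form rather than merely its volumetric form, since the definition of ``doubling metric space'' in Section~\ref{Intro} is combinatorial rather than measure-theoretic.
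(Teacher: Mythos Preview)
Your proposal is correct and follows essentially the same route as the paper: the paper likewise derives the metric doubling property from the Bishop--Gromov comparison theorem (citing \cite{Berger_2003} for the covering constant $4^n$) and then simply invokes Theorems~\ref{thm(1.0)} and~\ref{thm(1.1)}. Your added detail on converting volumetric doubling into the combinatorial covering bound is a reasonable elaboration of what the paper leaves implicit.
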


By assuming that $\mu$ is doubling, and using maximal partitions, we extend the result concerning the existence of the entropy dimension in \cite{Peres-Solomyak_2000} to complete smooth Riemannian manifolds.

\begin{thm}\label{thm(1.2)}
Let $M$ be a complete $n$-dimensional smooth Riemannian manifold. Assume that $\mu$ is a self-conformal doubling measure on $M$ with compact support $K$. Then the entropy dimension defined by (\ref{eq:edim_def}) exists.
\end{thm}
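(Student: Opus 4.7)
The plan is to transplant the argument of Peres and Solomyak \cite{Peres-Solomyak_2000} from $\mathbb R^n$ to the Riemannian setting, using the heavy maximal $\delta$-partitions developed in Section~\ref{sect.2} as a substitute for Euclidean dyadic grid partitions, with the doubling hypothesis on $\mu$ playing the role of the Lebesgue-null boundaries of cubes. For each small $\epsilon > 0$, introduce the stopping-time word set
\begin{equation*}
\Lambda_\epsilon := \{\mathbf{i} = i_1 \cdots i_k : r_\mathbf{i} \leq \epsilon < r_{\mathbf{i}|k-1}\},
\end{equation*}
where $r_\mathbf{i}$ denotes the Lipschitz contraction of $S_\mathbf{i} := S_{i_1}\circ\cdots\circ S_{i_k}$. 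The self-conformal identity then becomes $\mu = \sum_{\mathbf{i}\in\Lambda_\epsilon} p_\mathbf{i}\,\mu\circ S_\mathbf{i}^{-1}$ with $K = \bigcup_{\mathbf{i}\in\Lambda_\epsilon} S_\mathbf{i}(K)$, and bounded distortion of a $C^{1+\gamma}$ CIFS makes each $S_\mathbf{i}$ bi-Lipschitz on a neighborhood of $K$ with constants comparable to $r_\mathbf{i}$, uniformly in $\mathbf{i}$. Write $H_\epsilon := -\sum_{\mathbf{i}\in\Lambda_\epsilon} p_\mathbf{i}\log p_\mathbf{i}$.

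The heart of the argument will be the quasi-additive comparison
\begin{equation*}
\left| h(\mu, \delta) - H_\epsilon - \sum_{\mathbf{i}\in\Lambda_\epsilon} p_\mathbf{i}\, h(\mu, \delta/r_\mathbf{i}) \right| \leq C,
\end{equation*}
valid for all sufficiently small $\delta$, with $C$ independent of $\delta$. To establish the upper bound, I would take for each $\mathbf{i}$ a near-optimal heavy maximal $(\delta/r_\mathbf{i})$-partition $\mathcal{P}_\mathbf{i}$ of $K$, push it forward to a partition $S_\mathbf{i}(\mathcal{P}_\mathbf{i})$ of $S_\mathbf{i}(K)$ of diameter $\lesssim \delta$, and assemble these local partitions via a measurable selection into a single $\delta$-partition $\mathcal{P}$ of $K$; the standard concavity decomposition of $-x\log x$ applied to $\mu = \sum p_\mathbf{i}\,\mu \circ S_\mathbf{i}^{-1}$ then yields $h(\mu, \mathcal{P}) \leq H_\epsilon + \sum p_\mathbf{i}\, h(\mu, \delta/r_\mathbf{i}) + O(1)$. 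The lower bound is symmetric: pull back a near-optimal $\delta$-partition of $K$ by each $S_\mathbf{i}^{-1}$, noting that the pullbacks have diameter $\lesssim \delta/r_\mathbf{i}$, and apply the information-theoretic identity $h(\mu,\mathcal{P}) = H_\epsilon + \sum p_\mathbf{i}\, h(\mu\circ S_\mathbf{i}^{-1},\mathcal{P}) - H(X\mid\mathcal{P})$, where $X$ is the word chosen by the selection, bounding the last term by $H_\epsilon$.

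Iterating this quasi-additivity with $\delta_n := \eta\cdot\epsilon^n$ yields $h(\mu,\delta_n) = n H_\epsilon + O(n)$ and hence $h(\mu,\delta_n)/(-\log\delta_n) \to H_\epsilon/\log(1/\epsilon)$; monotonicity of $\delta \mapsto h(\mu,\delta)$ fills in the intermediate scales, and letting $\epsilon\to 0^+$ forces $\liminf$ and $\limsup$ in (\ref{eq:edim_def}) to coincide, giving the existence of $\dim_e(\mu)$. The main technical obstacle will be the overlap bookkeeping in the quasi-additivity step: without the OSC the sets $\{S_\mathbf{i}(K)\}_{\mathbf{i}\in\Lambda_\epsilon}$ are not disjoint, and in the Euclidean proof of \cite{Peres-Solomyak_2000} the contributions of cube boundaries and cell walls are killed by Lebesgue-nullity, a tool that is not available for the singular measure $\mu$ on $M$. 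The doubling hypothesis on $\mu$ supplies the replacement: combined with the heavy maximality of the partitions, it ensures that the $\mu$-mass accumulated near the overlapping boundaries $\partial S_\mathbf{i}(K)$ is controlled, so the entropy cost of any measurable selection resolving these overlaps remains uniformly bounded as $\delta,\epsilon\to 0^+$.
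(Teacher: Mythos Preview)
There is a real gap in the quasi-additivity step. Your information-theoretic lower bound uses the identity $h(\mu,\mathcal P)=H_\epsilon+\sum_{\mathbf i}p_{\mathbf i}\,h(\mu\circ S_{\mathbf i}^{-1},\mathcal P)-H(X\mid\mathcal P)$ together with the bound $H(X\mid\mathcal P)\le H_\epsilon$; but substituting this bound \emph{cancels} the $H_\epsilon$ term entirely, leaving only $h(\mu,\mathcal P)\ge\sum_{\mathbf i}p_{\mathbf i}\,h(\mu\circ S_{\mathbf i}^{-1},\mathcal P)$. Without the open set condition the conditional entropy $H(X\mid\mathcal P)$ is genuinely of order $H_\epsilon$, not $O(1)$: many distinct words $\mathbf i\in\Lambda_\epsilon$ can code the same region of $K$ (take e.g.\ two coinciding contractions), so knowing the $\delta$-cell does not pin down the word. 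Consequently the lower half of your comparison $|h(\mu,\delta)-H_\epsilon-\sum p_{\mathbf i}\,h(\mu,\delta/r_{\mathbf i})|\le C$ fails, and the iteration collapses: with only the upper half you get $h(\mu,\delta_n)\le nH_\epsilon+O(n)$, which says nothing about the limit since $H_\epsilon$ can vastly exceed $h(\mu,\epsilon)$ when overlaps are severe. (Separately, even granting both inequalities, ``$h(\mu,\delta_n)=nH_\epsilon+O(n)$'' does \emph{not} yield $h(\mu,\delta_n)/(-\log\delta_n)\to H_\epsilon/\log(1/\epsilon)$ for fixed $\epsilon$; the $O(n)$ survives division by $n$.)

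The paper avoids this by never comparing $h$ to the symbolic entropy $H_\epsilon$. Working with heavy maximal partitions $\mathbb E_{2^{-s-t+2}}$, it proves the direct super-additivity $h^*_{s+t}(\mu)\ge h^*_s(\mu)+h^*_t(\mu)-L$ and invokes Fekete. The decomposition used is $f(\mu(E))=f(P(E))\cdot\mu(E)/P(E)+P(E)\cdot f(\mu(E)/P(E))$ with $f(x)=-x\log x$ and $P(E):=\sum_{u\in\mathcal W_t:K_u\cap E\ne\emptyset}p_u$. The first summand integrates to $-\int_K\log P(E(x))\,d\mu$; since every $K_u$ meeting $E$ lies in a ball of radius $\asymp 2^{-t}$ about $x$, one has $P(E)\le\mu(B_{2^{-t+2}}(x))$, and the doubling of $\mu$ (Lemma~\ref{lem(5.1)}(b)) converts $-\int\log\mu(B_{2^{-t}}(x))\,d\mu$ back into $h^*_t(\mu)+O(1)$. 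The second summand is handled by concavity of $f$ and the good-cover Lemma~\ref{lem(2.4)}, yielding $h^*_s(\mu)+O(1)$. Thus the role of doubling is not to ``control boundary mass'' as you suggest, but to make $h^*_t(\mu)$ and $-\int\log\mu(B_{2^{-t}}(x))\,d\mu$ interchangeable up to $O(1)$ (Proposition~\ref{prop(5.1)} and Lemma~\ref{lem(5.1)}); this is precisely what lets the argument bypass $H_\epsilon$ and survive arbitrary overlaps.
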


The rest of this paper is organized as follows. Section~\ref{sect.2} summarizes some basic definitions, facts, and preliminary results on metric spaces. We also prove several lemmas. Section~\ref{sect.3} summarizes the definition and properties of CIFSs on Riemannian manifolds. In Section~\ref{sect.4}, we prove that the definitions of the $L^{q}$-dimension of $\mu$ in (\ref{eq:qdim_def}) and (\ref{eq:GD_def}) are equivalent. Section~\ref{sect.5} and Section~\ref{sect.6} are devoted to the proof of Theorem~\ref{thm(1.1)} and Theorem~\ref{thm(1.2)} respectively. In Section~\ref{sect.7}, we present some examples of conformal measures on Riemannian manifolds satisfying the conditions of Theorem~\ref{thm(1.1)} and Theorem~\ref{thm(1.2)}. Finally, in the Appendix, we include for completeness the proof that each metric space that carries a doubling measure must be doubling, and the proof of Lemma \ref{lem(2.2)}.

\section{\bf Preliminaries \label{sect.2}}

Let $(X,\rho,\mu)$ be a metric measure space. Recall that a collection $\mathcal{B}_{\delta}$ of disjoint closed $\delta$-balls with centers in
$A\subset X$ is called a \textit{$\delta$-packing} of $A$. Denote the cardinality of $\mathcal{B}_{\delta}$ by $\#\mathcal{B}_{\delta}$. Let $t*B_{\delta}(x):=B_{t\delta}(x)$ for $t>0$ and $2\mathcal{B}_{\delta}:=\{2*B:B\in\mathcal{B}_{\delta}\}$. We say
that a $\delta$-packing $\mathcal{B}_{\delta}$ of $A$ is \textit{maximal} if for any $x\in A$ there exists $B\in\mathcal{B}_{\delta}$ intersects
$B_{\delta}(x)$. Obviously, if $\mathcal{B}_{\delta}$ is a maximal $\delta$-packing of $A$, then $2\mathcal{B}_{\delta}$ covers $A$.

Assume that $A$ is compact, and let $\{B_{\delta}(x_j)\}_{j=1}^{h}$ be a maximal $\delta$-packing of $A$. For any $j\in\{1,\dots,h\}$, let
\begin{equation}\label{eq:E_def}
F_{2\delta}^{j}:=B_{2\delta}(x_j)\setminus\bigcup_{i=1}^{j-1}B_{2\delta}(x_i)
\qquad\text{and}\qquad
E_{2\delta}^{j}:=B_{\delta}(x_{j})\cup
\left(F_{2\delta}^{j}\setminus\bigcup_{i=1}^{h}B_{\delta}(x_{i})\right)
\end{equation}
(see Figure \ref{fig.1}). Then
\begin{equation}\label{eq(2.02)}
F_{2\delta}^{j}\subset B_{2\delta}(x_{j})\qquad\text{and}\qquad B_{\delta}(x_{j})\subset E_{2\delta}^{j}\subset B_{2\delta}(x_{j}).
\end{equation}
Moreover,
\begin{equation}\label{eq(2.01)}
A\subset\bigcup_{j=1}^{h}E_{2\delta}^{j}\qquad\text{and}\qquad E_{2\delta}^{i}\cap E_{2\delta}^{j}=\emptyset,\quad\text{for}~i\neq j.
\end{equation}
For any $j\in\{1,\dots,h\}$, let
\begin{equation}\label{eq(2.04)}
\mathcal{E}_{2\delta}^{j}:=E_{2\delta}^{j}\cap A.
\end{equation}
Making use of (\ref{eq(2.02)}) and (\ref{eq(2.04)}), we have
\begin{equation}\label{eq(2.05)}
\mathcal{E}_{2\delta}^{j}\subset B_{2\delta}(x_{j}),
\end{equation}
and thus,
\begin{equation}\label{eq(2.06)}
{\rm diam}(\mathcal{E}_{2\delta}^{j})\leq 4\delta.
\end{equation}
It follows from (\ref{eq(2.01)}), (\ref{eq(2.04)}) and (\ref{eq(2.06)}) that $\mathbb{E}_{4\delta}:=\big\{\mathcal{E}_{2\delta}^{j}\big\}_{j=1}^{h}$ is a $4\delta$-partition of $A$, which is generated by $\{B_{\delta}(x_j)\}_{j=1}^{h}$. We
say that such a partition is \textit{maximal}.

The following obvious properties will be used repeatedly in our proofs. Let $X$ be a metric space, and $P\subset X$ with ${\rm diam}(P)\leq\eta$. It follows that $P\subset B_{\eta/2}(x)$ with some $x\in X$. Hence for any $y\in X$ and $r>0$ satisfying $P\cap B_{r}(y)\neq\emptyset$, we have
\begin{equation}\label{eq(2.801)}
x\in B_{\frac{\eta}{2}+r}(y)\quad\text{and}\quad y\in B_{\frac{\eta}{2}+r}(x).
\end{equation}
Moreover,
\begin{equation}\label{eq(2.802)}
P\subset B_{\eta+r}(y)\quad\text{and}\quad B_{r}(y)\subset B_{\frac{\eta}{2}+2r}(x).
\end{equation}
For any $j\in\{1,\dots,h\}$ satisfying $P\cap E_{2\delta}^{j}\neq\emptyset$ or $P\cap \mathcal{E}_{2\delta}^{j}\neq\emptyset$, it follows from (\ref{eq(2.02)}) and (\ref{eq(2.05)}) that
\begin{equation}\label{eq(2.803)}
x\in B_{\frac{\eta}{2}+2\delta}(x_{j})\quad\text{and}\quad x_{j}\in B_{\frac{\eta}{2}+2\delta}(x).
\end{equation}
Moreover,
\begin{equation}\label{eq(2.804)}
P\subset B_{\eta+2\delta}(x_{j})\quad\text{and}\quad E_{2\delta}^{j},\mathcal{E}_{2\delta}^{j}\subset B_{\frac{\eta}{2}+4\delta}(x).
\end{equation}

\begin{figure}[H]
\centering
\begin{subfigure}{0.49\linewidth}
  \centering
  \includegraphics[width=6cm]{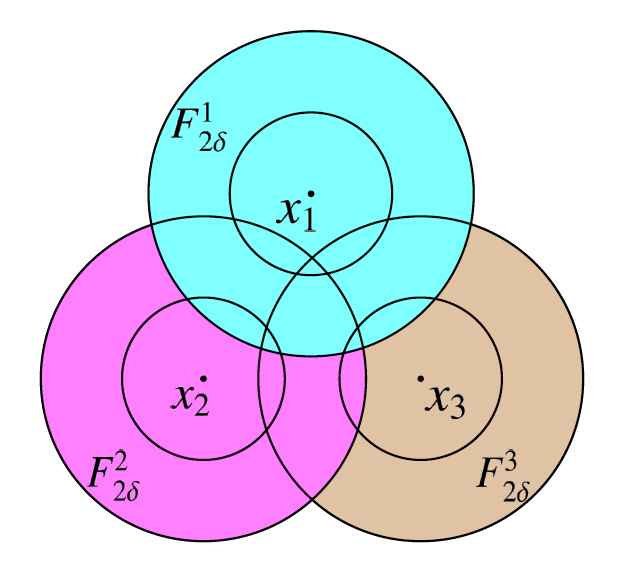}\\
  \caption*{(a)}\label{fig.(a)}
  \end{subfigure}
  \centering
  \begin{subfigure}{0.49\linewidth}
  \centering
  \includegraphics[width=6cm]{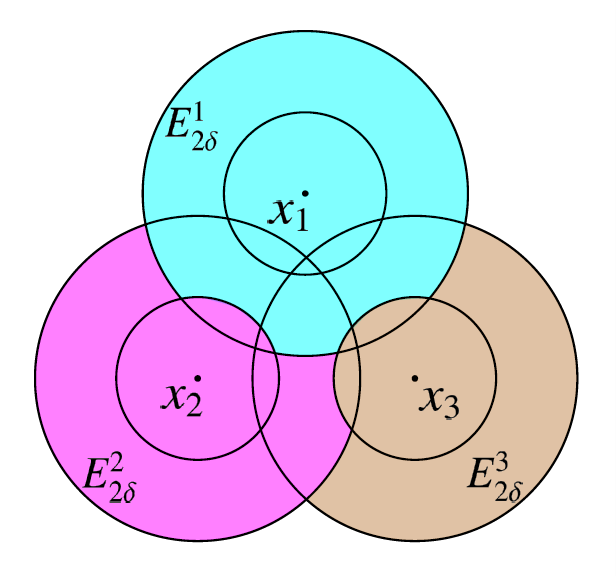}\\
  \caption*{(b)}\label{fig.(b)}
  \end{subfigure}
  \caption{Illustration of the sets in (\ref{eq:E_def}). (a) The sets $F_{2\delta}^{1}$, $F_{2\delta}^{2}$ and $F_{2\delta}^{3}$.
  (b) The sets $E_{2\delta}^{1}$, $E_{2\delta}^{2}$ and $E_{2\delta}^{3}$.}\label{fig.1}
\end{figure}

\begin{defi}\label{defi(2.1)}
Let $(X,\rho,\mu)$ be a metric measure space and $A\subset X$ be compact. We say that a maximal $\delta$-packing $\{B_{\delta}(x_j)\}_{j=1}^{h}$ of $A$ is
\textbf{heavy} if $\mu(B_{\delta}(x))\leq2\mu(B_{\delta}(x_1))$ for any $x\in A$, and $\mu(B_{\delta}(x))\leq2\mu(B_{\delta}(x_j))$ for any
$x\in A\setminus\bigcup_{i=1}^{j-1}B_{2\delta}(x_{i})$ and $j\in\{2,\dots,h\}$.
In addition, a maximal $4\delta$-partition $\mathbb{E}_{4\delta}$ of $A$ is called \textbf{heavy} if it is generated by a heavy maximal $\delta$-packing of $A$.
\end{defi}

The following lemma is well known (see, e.g., \cite{Heinonen_2001, Antti_2014}).

\begin{lem}\label{lem(2.1)}
For a metric space $X$, the following statements are equivalent.\\
$(a)$ $X$ is doubling;\\
$(b)$ there are two constants $p>0$ and $D_{0}\geq1$ such that for all $R\geq r>0$, any ball of radius $R$ can be covered by $D_{0}(R/r)^{p}$ balls of radius $r$;\\
$(c)$ there are two constants $p>0$ and $D_{0}\geq1$ such that if $R\geq r>0$, then the cardinality of any $r$-packing of a ball of radius $R$ is at most $D_{0}(R/r)^{p}$.
\end{lem}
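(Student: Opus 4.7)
The plan is to prove the cyclic chain (a) $\Rightarrow$ (b) $\Rightarrow$ (c) $\Rightarrow$ (a); each step is a short iteration or greedy argument, and no external input beyond the definitions is needed. For (a) $\Rightarrow$ (b), I would iterate the doubling property: by induction on $k$, any ball of radius $2^{k} r$ is covered by $N_{0}^{k}$ balls of radius $r$. Given $R\geq r$, pick the integer $k\geq 0$ with $2^{k-1} r \leq R \leq 2^{k} r$; then a ball of radius $R$ lies inside some ball of radius $2^{k} r$, and is therefore covered by $N_{0}^{k}\leq N_{0}(R/r)^{\log_{2}N_{0}}$ balls of radius $r$. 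This gives (b) with $p:=\log_{2}N_{0}$ and $D_{0}:=N_{0}$.

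For (b) $\Rightarrow$ (c), I would apply (b) with $r/2$ in place of $r$ to cover $B_{R}(z)$ by at most $D_{0}(2R/r)^{p}$ balls of radius $r/2$. Given any $r$-packing $\{B_{r}(x_{i})\}_{i=1}^{m}$ of $B_{R}(z)$, disjointness of the closed $r$-balls forces $d(x_{i},x_{j})>2r$ for $i\neq j$; since each cover ball has diameter at most $r$, it contains at most one center $x_{i}$. Hence $m\leq D_{0}\,2^{p}(R/r)^{p}$, which is (c) after renaming the constant.

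For (c) $\Rightarrow$ (a), I would select, by Zorn's lemma or a greedy construction, a maximal collection $\{x_{1},\dots,x_{m}\}\subset B_{2r}(x)$ with pairwise distances $\geq r$. Maximality ensures that every point of $B_{2r}(x)$ lies within distance less than $r$ of some $x_{i}$, so the balls $B_{r}(x_{i})$ cover $B_{2r}(x)$. At the same time, the closed balls $B_{r/3}(x_{i})$ are pairwise disjoint (a common point would force $d(x_{i},x_{j})\leq 2r/3<r$), hence form an $(r/3)$-packing of $B_{2r}(x)$. Applying (c) with $R=2r$ and $r$ replaced by $r/3$ then gives $m\leq D_{0}\cdot 6^{p}$, so (a) holds with $N_{0}:=D_{0}\cdot 6^{p}$.

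There is no genuinely hard step in this well-known lemma; the only place that requires a bit of care is the bookkeeping of open versus closed balls and of strict versus non-strict inequalities in the packing and covering definitions. These issues cause only constant-factor adjustments in $D_{0}$ (and hence a possibly larger exponent $p$ across the three implications) and do not affect the logical structure of the proof.
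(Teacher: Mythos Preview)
Your cyclic proof (a) $\Rightarrow$ (b) $\Rightarrow$ (c) $\Rightarrow$ (a) is correct and is exactly the standard argument. Note, however, that the paper does not supply a proof of this lemma at all: it records the statement as ``well known'' and cites \cite{Heinonen_2001, Antti_2014}, so there is no in-paper argument to compare against. Your write-up would serve as a perfectly adequate proof in its place.

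One small inaccuracy worth fixing: in the step (b) $\Rightarrow$ (c) you assert that disjointness of the closed $r$-balls forces $d(x_i,x_j)>2r$. In a general metric space this need not hold (disjointness only gives $d(x_i,x_j)>r$, since $x_i\in B_r(x_i)$ implies $x_i\notin B_r(x_j)$). Fortunately your actual conclusion---that a covering ball of radius $r/2$ contains at most one center---requires only $d(x_i,x_j)>r$: if $x_i,x_j$ both lie in a ball of radius $r/2$, then $d(x_i,x_j)\le r$, contradicting disjointness. So the argument survives; just weaken the intermediate claim. Similarly, in (c) $\Rightarrow$ (a) the appeal to Zorn's lemma is harmless but unnecessary, since the bound from (c) guarantees the greedy process terminates after finitely many steps.
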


By Lemma \ref{lem(2.1)}, we obtain the following proposition.

\begin{prop}\label{prop(2.1)}
Let $(X,\rho,\mu)$ be a metric measure space with the doubling property and $A\subset X$ be compact. Assume that $\{B_{\delta}(x_j)\}_{j=1}^{h}$ is a heavy maximal $\delta$-packing of $A$. Let $\{F_{2\delta}^{j}\}_{j=1}^{h}$ and $\{E_{2\delta}^{j}\}_{j=1}^{h}$ be defined with respect to $\{B_{\delta}(x_j)\}_{j=1}^{h}$ as in (\ref{eq:E_def}). Then there exists a constant $C_{1}\geq1$ such that for any $\delta>0$ and $j\in\{1,\dots,h\}$, $\mu(E_{2\delta}^{j})\leq C_{1}\mu(B_{\delta}(x_{j}))$.
\end{prop}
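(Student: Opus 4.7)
The plan is to decompose $E_{2\delta}^j$ into the disjoint union $B_\delta(x_j) \sqcup Q_j$, where
$$Q_j := F_{2\delta}^j \setminus \bigcup_{i=1}^h B_\delta(x_i),$$
so that $\mu(E_{2\delta}^j) = \mu(B_\delta(x_j)) + \mu(Q_j)$, and the task reduces to bounding $\mu(Q_j)$ by a doubling-constant-dependent multiple of $\mu(B_\delta(x_j))$. The pieces are disjoint because $Q_j$ explicitly excludes $B_\delta(x_j)$.

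For the bound on $\mu(Q_j)$, I would apply Lemma \ref{lem(2.1)}(b) with $R = 2\delta$ and $r = \delta/2$ to cover $B_{2\delta}(x_j) \supset Q_j$ by a family $\{B_{\delta/2}(y_k)\}_{k=1}^N$ of balls of radius $\delta/2$, where $N := D_0 \cdot 4^p$ is independent of $\delta$. For each $k$ such that $B_{\delta/2}(y_k) \cap Q_j \cap {\rm supp}(\mu) \ne \emptyset$, pick a point $z_k$ in that intersection; since the applications will have ${\rm supp}(\mu) \subset A$, we automatically have $z_k \in A$, and the triangle inequality gives $B_{\delta/2}(y_k) \subset B_\delta(z_k)$.

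By construction $z_k \in Q_j \subset F_{2\delta}^j$, which precisely says $z_k \in A \setminus \bigcup_{i<j} B_{2\delta}(x_i)$, so the heavy condition in Definition \ref{defi(2.1)} (with its $j = 1$ clause handling the first step) yields $\mu(B_\delta(z_k)) \le 2\mu(B_\delta(x_j))$. Summing the estimates $\mu(B_{\delta/2}(y_k)) \le \mu(B_\delta(z_k))$ over the relevant indices produces $\mu(Q_j) \le 2N\mu(B_\delta(x_j))$, and hence $\mu(E_{2\delta}^j) \le (1 + 2N)\mu(B_\delta(x_j))$, so the choice $C_1 := 1 + 2N$ works.

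The subtle point, and the reason for splitting $B_\delta(x_j)$ off before the covering step, is that disjointness of the closed packing $\delta$-balls only forces $\rho(x_i,x_j) > 2\delta$, so $B_\delta(x_j)$ can overlap an earlier $B_{2\delta}(x_i)$. A point of $B_\delta(x_j)$ need not lie in the heavy-eligible region $A \setminus \bigcup_{i<j} B_{2\delta}(x_i)$, and one cannot apply the heavy bound to a covering ball centered there. Isolating $B_\delta(x_j)$, whose $\mu$-mass is trivially controlled, and running the covering argument only on $Q_j$, which by construction avoids every earlier $B_{2\delta}(x_i)$, is exactly what makes this obstruction vanish.
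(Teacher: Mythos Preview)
Your proof is correct and follows essentially the same strategy as the paper's: split $E_{2\delta}^j$ into $B_\delta(x_j)$ and the leftover piece $Q_j$, cover $Q_j\subset B_{2\delta}(x_j)$ by a bounded number of balls of radius comparable to $\delta$, and apply the heavy condition to the (re)centers, which lie outside $\bigcup_{i<j}B_{2\delta}(x_i)$. The paper's version differs only cosmetically---it handles $j=1$ separately and, for $j\ge 2$, uses a maximal $(\delta/2)$-packing of $Q_j$ together with Lemma~\ref{lem(2.1)}(c) instead of your $(\delta/2)$-covering of $B_{2\delta}(x_j)$ via Lemma~\ref{lem(2.1)}(b)---and arrives at the identical constant $C_1=1+D_0\,2^{2p+1}$.
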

\begin{proof}
For $j=1$, (\ref{eq(2.02)}) and Lemma \ref{lem(2.1)}(b) imply that there exist $D_{0}2^{p}$ $\delta$-balls such that $E_{2\delta}^{1}\subset B_{2\delta}(x_{1})\subset\bigcup_{i=1}^{D_{0}2^{p}}B_{\delta}(x_{1_{i}})$. It follows from the definition of a heavy maximal packing that for any $i\in\{1,\dots,D_{0}2^{p}\}$,
$$\mu(B_{\delta}(x_{1_{i}}))\leq2\mu(B_{\delta}(x_{1})).$$
Hence
$$\mu(E_{2\delta}^{1})\leq\sum_{i=1}^{D_{0}2^{p}}\mu(B_{\delta}(x_{1_{i}}))\leq D_{0}2^{p+1}\mu(B_{\delta}(x_{1})).$$
For any $j\in\{2,\dots,h\}$, let $\mathcal{B}$ be a maximal $(\delta/2)$-packing of $F_{2\delta}^{j}\setminus\bigcup_{i=1}^{h}B_{\delta}(x_{i})$. Then $2\mathcal{B}$ covers $F_{2\delta}^{j}\setminus\bigcup_{i=1}^{h}B_{\delta}(x_{i})$. Making use of (\ref{eq(2.02)}), we have
$F_{2\delta}^{j}\setminus\bigcup_{i=1}^{h}B_{\delta}(x_{i})\subset B_{2\delta}(x_{j})$.
By Lemma \ref{lem(2.1)}(c), the cardinality of any $(\delta/2)$-packing of $B_{2\delta}(x_{j})$ is at most $D_{0}4^{p}$.
Hence $\#\mathcal{B}\leq D_{0}4^{p}$. Since the center of any ball $B\in2\mathcal{B}$  lies in $A\setminus\bigcup_{i=1}^{j-1}B_{2\delta}(x_{i})$, it follows from the definition of a heavy maximal packing that
$$\mu(B)\leq2\mu(B_{\delta}(x_{j})).$$
Hence
$$\mu(E_{2\delta}^{j})\leq\mu(B_{\delta}(x_{j}))+\sum_{B\in2\mathcal{B}}\mu(B)\leq (1+D_{0}2^{2p+1})\mu(B_{\delta}(x_{j})).$$
The proposition follows by letting $C_{1}:=1+D_{0}2^{2p+1}\geq1$.
\end{proof}

We need the following lemma. For completeness, we include a proof of Lemma \ref{lem(2.2)} in the Appendix.

\begin{lem}\label{lem(2.2)}
Let $q>0$ and $k\in\mathbb{N}$. Then there exists a constant $A_{0}>0$ such that for any sequence $\{a_{i}\}_{i=1}^{k}$ of nonnegative numbers, we have
$\big(\sum_{i=1}^{k}a_{i}\big)^{q}\leq A_{0}\sum_{i=1}^{k}a_{i}^{q}$. In particular, we may take $A_{0}:=\max\{k^{q-1},1\}$.
\end{lem}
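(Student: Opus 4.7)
The plan is to split into the two cases $q\ge 1$ and $0<q<1$, since the bound $A_0=\max\{k^{q-1},1\}$ behaves differently in each: it equals $k^{q-1}\ge 1$ when $q\ge 1$ and equals $1$ when $0<q<1$ (because $k^{q-1}\le 1$ in that range). The heart of the argument is convexity of the power function on $[0,\infty)$ in the first case, and subadditivity of the power function in the second.

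For $q\ge 1$, I would invoke convexity of $x\mapsto x^q$ on $[0,\infty)$, which gives the standard power-mean inequality
\[
\left(\frac{1}{k}\sum_{i=1}^{k}a_{i}\right)^{q}\le\frac{1}{k}\sum_{i=1}^{k}a_{i}^{q}.
\]
Multiplying both sides by $k^q$ yields $(\sum_{i=1}^{k}a_{i})^{q}\le k^{q-1}\sum_{i=1}^{k}a_{i}^{q}$, which is the desired inequality with $A_0=k^{q-1}=\max\{k^{q-1},1\}$.

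For $0<q<1$, I would first establish the pointwise inequality $(a+b)^{q}\le a^{q}+b^{q}$ for all $a,b\ge 0$. This can be done by fixing $a\ge 0$ and considering $f(x):=(a+x)^{q}-x^{q}$ for $x\ge 0$; since $q-1<0$, the derivative $f'(x)=q\bigl((a+x)^{q-1}-x^{q-1}\bigr)\le 0$, so $f$ is nonincreasing and $f(b)\le f(0)=a^{q}$. An easy induction on $k$ then upgrades this to $(\sum_{i=1}^{k}a_{i})^{q}\le\sum_{i=1}^{k}a_{i}^{q}$, giving the inequality with $A_0=1=\max\{k^{q-1},1\}$.

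There is no real obstacle here; the only subtlety is making sure the constant $\max\{k^{q-1},1\}$ is tight in both cases so that a single formula for $A_0$ covers both ranges of $q$. The case split is forced by the geometry of the power function (convex versus concave at $q=1$), and once the two one-line inequalities above are in hand, the result follows immediately.
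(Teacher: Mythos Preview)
Your proof is correct and follows essentially the same strategy as the paper: split at $q=1$ and use the convexity/concavity of $x\mapsto x^{q}$ in each regime. The only cosmetic differences are that for $q>1$ the paper invokes H\"older's inequality (equivalent to your Jensen/power-mean step), and for $0<q\le 1$ the paper normalizes by $\sum_i a_i$ and uses $t^{q}\ge t$ for $t\in[0,1]$ rather than your derivative-based subadditivity argument; both routes are standard and equally short.
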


Let $X$ be a metric space, and $\mu$ be a finite positive Borel measure on $X$ with compact support $K$. For $q\in\mathbb{R}$, define
\begin{equation}\label{S_def}
\mathcal{S}_{2^{-t}}(q):=\sup\Bigg\{\sum_{B\in\mathcal{B}_{2^{-t}}}\mu(B)^{q}: \mathcal{B}_{2^{-t}}~\text{is a}~2^{-t}\text{-packing of}~K\Bigg\}.
\end{equation}
The lower $L^{q}$-spectrum of $\mu$ can be defined equivalently as
\begin{equation}\label{eq(2.8)}
\underline{\tau}(q):=\varliminf_{t\rightarrow\infty}\frac{\log \mathcal{S}_{2^{-t}}(q)}{-t\log2},
\end{equation}
the same holds for $\overline{\tau}(q)$ and thus $\tau(q)$.
Let
\begin{equation}\label{S^*_def}
\mathcal{S}^{*}_{2^{-t}}(q):=
\sup\Bigg\{\sum_{B\in\mathcal{B}_{2^{-t}}^{*}}\mu(B)^{q}: \mathcal{B}_{2^{-t}}^{*}~\text{is a heavy maximal}~2^{-t}\text{-packing of}~K\Bigg\}.
\end{equation}

The following proposition shows that $\mathcal{S}_{2^{-t}}(q)$ and $\mathcal{S}^{*}_{2^{-t}}(q)$ control each other.

\begin{prop}\label{prop(2.2)}
Let $X$ be a doubling metric space. Assume that $\mu$ is a finite positive Borel measure on $X$ with compact support $K$. Let $\mathcal{S}_{2^{-t}}(q)$ and $\mathcal{S}^{*}_{2^{-t}}(q)$ be defined as in (\ref{S_def}) and (\ref{S^*_def}) respectively. Then there exists a constant $C_{2}\geq1$ such that for any $q>0$,
$$
\mathcal{S}^{*}_{2^{-t}}(q)\leq\mathcal{S}_{2^{-t}}(q)\leq C_{2}\mathcal{S}^{*}_{2^{-t}}(q).
$$
\end{prop}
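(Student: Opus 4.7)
The plan is to prove the two inequalities separately. The left inequality is essentially a tautology: a heavy maximal $2^{-t}$-packing is in particular a $2^{-t}$-packing, so the supremum of $\sum_{B}\mu(B)^q$ over heavy maximal packings cannot exceed the corresponding supremum over all $2^{-t}$-packings.

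For the right inequality, I will fix an arbitrary $2^{-t}$-packing $\mathcal{B}_{2^{-t}}=\{B_{2^{-t}}(y_i)\}_{i=1}^m$ of $K$ together with any heavy maximal $2^{-t}$-packing $\mathcal{B}_{2^{-t}}^*=\{B_{2^{-t}}(x_j)\}_{j=1}^h$ of $K$, and dominate each ball in $\mathcal{B}_{2^{-t}}$ by a nearby ball in $\mathcal{B}_{2^{-t}}^*$. Since $\{B_{2\cdot 2^{-t}}(x_j)\}_{j=1}^h$ covers $K$ by maximality, for each $i$ one can define $j(i):=\min\{j: y_i\in B_{2\cdot 2^{-t}}(x_j)\}$. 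The minimality guarantees $y_i\in K\setminus\bigcup_{k<j(i)}B_{2\cdot 2^{-t}}(x_k)$, so the appropriate clause of the heaviness condition in Definition \ref{defi(2.1)} yields $\mu(B_{2^{-t}}(y_i))\leq 2\mu(B_{2^{-t}}(x_{j(i)}))$.

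Next, I will control the size of each fibre $I_j:=\{i:j(i)=j\}$ using the doubling property of $X$. The centers $\{y_i:i\in I_j\}$ all lie inside $B_{2\cdot 2^{-t}}(x_j)$, and since the closed balls $B_{2^{-t}}(y_i)$ are pairwise disjoint, $\{y_i:i\in I_j\}$ forms a $2^{-t}$-packing of $B_{2\cdot 2^{-t}}(x_j)$. Lemma \ref{lem(2.1)}(c) then bounds $|I_j|$ by $D_0\cdot 2^p$. Combining the two ingredients gives
$$
\sum_{i=1}^m\mu(B_{2^{-t}}(y_i))^q=\sum_{j=1}^h\sum_{i\in I_j}\mu(B_{2^{-t}}(y_i))^q\leq D_0\cdot 2^p\cdot 2^q\sum_{j=1}^h\mu(B_{2^{-t}}(x_j))^q,
$$
and taking the supremum over $\mathcal{B}_{2^{-t}}$ on the left while bounding the right-hand side by $\mathcal{S}_{2^{-t}}^*(q)$ yields the claim with $C_2:=D_0\cdot 2^{p+q}$.

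The argument is essentially routine once the correct correspondence $i\mapsto j(i)$ is set up; the only mildly delicate point is checking that the minimality of $j(i)$ places $y_i$ in the region where the appropriate clause of the heaviness condition applies, so that $\mu(B_{2^{-t}}(y_i))\leq 2\mu(B_{2^{-t}}(x_{j(i)}))$ holds uniformly both for $j(i)=1$ and for $j(i)\geq 2$. Note that $C_2$ depends on $q$ through the factor $2^q$, which is harmless for the subsequent asymptotic applications since it contributes only an $O(1/t)$ term after taking $\log/(-t\log 2)$.
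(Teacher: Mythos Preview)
Your proof is correct and, in fact, more direct than the paper's. The paper does not compare $B_{2^{-t}}(y_i)$ to a single ball $B_{2^{-t}}(x_{j(i)})$; instead it passes through the auxiliary partition $\{E_{2^{-t+1}}^{j}\}_{j=1}^{h}$ built from the heavy maximal packing (see (\ref{eq:E_def})), bounds $\mu(B)^{q}$ by $A_{1}\sum_{j:B\cap E_{2^{-t+1}}^{j}\neq\emptyset}\mu(E_{2^{-t+1}}^{j})^{q}$ via Lemma \ref{lem(2.2)}, double--counts using Lemma \ref{lem(2.1)}(c), and then invokes Proposition \ref{prop(2.1)} to replace $\mu(E_{2^{-t+1}}^{j})$ by $C_{1}\mu(B_{2^{-t}}(x_{j}))$, arriving at $C_{2}=D_{0}A_{1}C_{1}^{q}3^{p}$. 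Your assignment $i\mapsto j(i):=\min\{j:y_i\in B_{2\cdot 2^{-t}}(x_j)\}$ exploits Definition \ref{defi(2.1)} directly---the minimality is exactly what places $y_i$ in the set $K\setminus\bigcup_{k<j(i)}B_{2\cdot 2^{-t}}(x_k)$ on which the heaviness clause applies---and so bypasses both Proposition \ref{prop(2.1)} and Lemma \ref{lem(2.2)}, yielding the cleaner constant $C_{2}=D_{0}2^{p+q}$. The paper's route has the advantage of reusing the $E^{j}$--machinery that is needed anyway for Lemma \ref{lem(2.3)} and the later good--cover arguments, but for this proposition in isolation your argument is shorter. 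Both constants depend on $q$, which (as you note) is immaterial for the subsequent limit computations.
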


\begin{proof}
It follows from definition that
\begin{equation}\label{eq(2.4)}
\mathcal{S}^{*}_{2^{-t}}(q)\leq\mathcal{S}_{2^{-t}}(q).
\end{equation}
To prove the reverse inequality, let $\mathcal{B}^{*}_{2^{-t}}:=\{B_{2^{-t}}(x_{j})\}_{j=1}^{h}$ be a heavy maximal $2^{-t}$-packing of $K$, and let $\{E_{2^{-t+1}}^{j}\}_{j=1}^{h}$ be defined with respect to $\mathcal{B}^{*}_{2^{-t}}$ as in (\ref{eq:E_def}). Making use of (\ref{eq(2.02)}), we have $E_{2^{-t+1}}^{j}\subset B_{2^{-t+1}}(x_{j})$ for $j\in\{1,\dots,h\}$. Let $\mathcal{B}_{2^{-t}}$ be a $2^{-t}$-packing of $K$.
For $B\in\mathcal{B}_{2^{-t}}$, let $x$ be the center of $B$. If $B\cap E_{2^{-t+1}}^{j}\neq\emptyset$, then by (\ref{eq(2.803)}), we have
\begin{equation}\label{eq(2.41)}
x_{j}\in 3*B\quad\text{and}\quad x\in B_{3\cdot2^{-t}}(x_{j}).
\end{equation}
By Lemma \ref{lem(2.1)}(c), the cardinality of any $2^{-t}$-packing of $3*B$ is at most $D_{0}3^{p}$. Hence $B$ intersects at most $D_{0}3^{p}$ elements of $\{E_{2^{-t+1}}^{j}\}_{j=1}^{h}$.
Let $A_{1}:=\max\{D_{0}^{q-1}3^{p(q-1)},1\}$. Since $B\subset\bigcup_{j:B\cap E_{2^{-t+1}}^{j}\neq\emptyset}E_{2^{-t+1}}^{j}$, it follows from Lemma \ref{lem(2.2)} that for $q>0$,
\begin{equation}\label{eq(2.5)}
\mu(B)^{q}\leq\Bigg(\sum_{j:B\cap E_{2^{-t+1}}^{j}\neq\emptyset}\mu(E_{2^{-t+1}}^{j})\Bigg)^{q}
\leq A_{1}\sum_{j:B\cap E_{2^{-t+1}}^{j}\neq\emptyset}\mu(E_{2^{-t+1}}^{j})^{q}.
\end{equation}
By Lemma \ref{lem(2.1)}(c), the cardinality of any $2^{-t}$-packing of $B_{3\cdot2^{-t}}(x_{j})$ is at most $D_{0}3^{p}$. In view of this and (\ref{eq(2.41)}), we see that $E_{2^{-t+1}}^{j}$ intersects at most $D_{0}3^{p}$ elements of $\mathcal{B}_{2^{-t}}$. Thus, for $q>0$,
\begin{equation}\label{eq(2.6)}\begin{aligned}
\sum_{B\in\mathcal{B}_{2^{-t}}}\mu(B)^{q}
&\leq A_{1}\sum_{B\in\mathcal{B}_{2^{-t}}}\sum_{j:B\cap E_{2^{-t+1}}^{j}\neq\emptyset}\mu(E_{2^{-t+1}}^{j})^{q}\quad(\text{by}~(\ref{eq(2.5)}))\\
&\leq D_{0}A_{1}3^{p}\sum_{j=1}^{h}\mu(E_{2^{-t+1}}^{j})^{q}\\
&\leq D_{0}A_{1}C_{1}^{q}3^{p}\sum_{j=1}^{h}\mu(B_{2^{-t}}(x_{j}))^{q}\quad(\text{by Proposition}~\ref{prop(2.1)}).
\end{aligned}\end{equation}
Let $C_{2}:=D_{0}A_{1}C_{1}^{q}3^{p}\geq1$. It follows from (\ref{eq(2.6)}) that
\begin{equation}\label{eq(2.7)}
\mathcal{S}_{2^{-t}}(q)\leq C_{2}\mathcal{S}^{*}_{2^{-t}}(q).
\end{equation}
Combining (\ref{eq(2.4)}) and (\ref{eq(2.7)}) yields the proposition.
\end{proof}

We adopt the following definition from \cite{Peres-Solomyak_2000}.

\begin{defi}\label{defi(2.1)}
Let $X$ be a metric space and $A\subset X$ be compact. For fix $Q,\delta>0$, $D\in\mathbb{N}$, we say that a cover $\{G_{i}\}_{i=1}^{k}$ of $A$ by Borel sets is $\boldsymbol{(Q,\delta,D)}$-\textbf{good} if ${\rm diam}(G_{i})\leq Q\delta$, $G_{i}\cap A\neq\emptyset$ for any $i\in\{1,\dots,k\}$, and any $\delta$-ball intersects at most $D$ elements of the cover.
\end{defi}

Let $\mu$ be a finite positive Borel measure with compact support $K$. For a good cover of $K$, we have the following lemma.

\begin{lem}\label{lem(2.3)}
Assume the same hypotheses of Proposition \ref{prop(2.2)}. Let $Q,q>0$ and $D\in\mathbb{N}$. Then there exists a constant $C_{3}\geq1$ such that for any $t\in\mathbb{N}$ and $(Q,2^{-t},D)$-good cover $\{G_{i}\}_{i=1}^{k}$ of $K$,
\begin{equation}\label{eq(2.101)}
C_{3}^{-1}\mathcal{S}_{2^{-t}}(q)\leq\sum_{i=1}^{k}\mu(G_{i})^{q}\leq C_{3}\mathcal{S}_{2^{-t}}(q).
\end{equation}
\end{lem}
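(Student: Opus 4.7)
The plan is to compare $\sum_i\mu(G_i)^q$ to the sum $\sum_j\mu(B_{2^{-t}}(x_j))^q$ over a single heavy maximal $2^{-t}$-packing $\{B_{2^{-t}}(x_j)\}_{j=1}^{h}$ of $K$, and then use Proposition~\ref{prop(2.2)} to transfer between $\mathcal{S}^{*}_{2^{-t}}(q)$ and $\mathcal{S}_{2^{-t}}(q)$. Fix such a packing and let $E^{j}:=E_{2\cdot 2^{-t}}^{j}$ be the sets from (\ref{eq:E_def}); recall that $\{E^j\cap K\}_{j=1}^h$ partitions $K$ and $E^{j}\subset B_{2\cdot 2^{-t}}(x_{j})$.

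For the inequality $\mathcal{S}_{2^{-t}}(q)\le C_3\sum_i\mu(G_i)^q$, I start from the fact that $\{G_i\}$ covers $K$: for each $j$,
$$\mu(B_{2^{-t}}(x_j))=\mu(B_{2^{-t}}(x_j)\cap K)\le\sum_{i:\,G_i\cap B_{2^{-t}}(x_j)\neq\emptyset}\mu(G_i),$$
a sum of at most $D$ terms by the good-cover property. I then raise both sides to the $q$th power via Lemma~\ref{lem(2.2)} with $k=D$, sum over $j$, and interchange the order of summation. To control $\#\{j:G_i\cap B_{2^{-t}}(x_j)\neq\emptyset\}$, I observe that all such $x_j$ lie in $B_{(Q+1)2^{-t}}(y)$ for any $y\in G_i$, and hence are centers of a $2^{-t}$-packing of this ball; Lemma~\ref{lem(2.1)}(c) gives the uniform bound $D_0(Q+1)^p$. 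Taking the supremum over heavy maximal packings and invoking Proposition~\ref{prop(2.2)} finishes this direction.

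For the reverse inequality $\sum_i\mu(G_i)^q\le C_3\mathcal{S}_{2^{-t}}(q)$, I use the partition structure: $\mu(G_i)=\mu(G_i\cap K)\le\sum_{j:\,G_i\cap E^j\neq\emptyset}\mu(E^j)$. Such $x_j$ lie in $B_{(Q+2)2^{-t}}(y)$ for any $y\in G_i$ (triangle inequality, using $E^j\subset B_{2\cdot 2^{-t}}(x_j)$), so their number is at most $D_0(Q+2)^p$ by Lemma~\ref{lem(2.1)}(c). Applying Lemma~\ref{lem(2.2)} and swapping the order of summation,
$$\sum_i\mu(G_i)^q\le A_0'\sum_j\mu(E^j)^q\cdot\#\{i:G_i\cap E^j\neq\emptyset\}.$$
The count $\#\{i:G_i\cap E^j\neq\emptyset\}$ is bounded by covering $B_{2\cdot 2^{-t}}(x_j)\supset E^j$ with at most $D_0 2^p$ balls of radius $2^{-t}$ (Lemma~\ref{lem(2.1)}(b)), each of which intersects at most $D$ of the $G_i$. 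Finally, Proposition~\ref{prop(2.1)} converts $\mu(E^j)$ to $\mu(B_{2^{-t}}(x_j))$ up to the factor $C_1$, so $\sum_j\mu(E^j)^q\le C_1^q\mathcal{S}^{*}_{2^{-t}}(q)\le C_1^q\mathcal{S}_{2^{-t}}(q)$.

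The main obstacle is not conceptual but bookkeeping: producing a single constant $C_3$ that works uniformly in $t$ requires tracking the doubling exponent $p$, the doubling constant $D_0$, the multiplicity $D$ of the good cover, the constant $C_1$ from Proposition~\ref{prop(2.1)}, and the constant $A_0$ from Lemma~\ref{lem(2.2)} (which itself depends on the multiplicities computed above, and differs for $0<q<1$ versus $q\ge 1$). The heavy-packing framework developed in Section~\ref{sect.2} was designed precisely to make these overlap counts doubling-uniform, so once the constants are reconciled, $C_3$ is taken as the larger of the two bounds above.
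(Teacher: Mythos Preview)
Your proposal is correct and follows essentially the same route as the paper's proof: both directions are obtained by comparing $\sum_i\mu(G_i)^q$ with the sum over a heavy maximal $2^{-t}$-packing via the associated sets $E^j$, using Lemma~\ref{lem(2.2)} for the power inequality, Lemma~\ref{lem(2.1)} for the overlap counts, Proposition~\ref{prop(2.1)} to pass from $\mu(E^j)$ to $\mu(B_{2^{-t}}(x_j))$, and Proposition~\ref{prop(2.2)} to convert $\mathcal{S}^{*}_{2^{-t}}(q)$ into $\mathcal{S}_{2^{-t}}(q)$. The only cosmetic difference is that the paper rounds $Q$ up to a power of $2$ (writing the overlap bounds as $D_0(2^{m-1}+1)^p$, $D_0(2^{m-1}+2)^p$) whereas you use $D_0(Q+1)^p$, $D_0(Q+2)^p$ directly; this does not affect the argument.
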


\begin{proof}
Let $\{B_{2^{-t}}(x_{j})\}_{j=1}^{h}$ be a heavy maximal $2^{-t}$-packing of $K$ with $t\in\mathbb{N}$, and let $\{E_{2^{-t+1}}^{j}\}_{j=1}^{h}$ be defined with respect to $\{B_{2^{-t}}(x_{j})\}_{j=1}^{h}$ as in (\ref{eq:E_def}). Let $\tilde{Q}:=\max\{1,Q\}$ and $m\in\mathbb{N}$ such that $\tilde{Q}\leq2^{m}$. Then ${\rm diam}(G_{i})\leq Q\cdot2^{-t}\leq\tilde{Q}\cdot2^{-t}\leq2^{m-t}$, where $i\in\{1,\dots,k\}$. Hence $G_{i}\subset B_{2^{m-t-1}}(x)\subset B_{2^{m-1}}(x)$ for some $x\in X$. For any $j\in\{1,\dots,h\}$, if $E_{2^{-t+1}}^{j}\cap G_{i}\neq\emptyset$, then by (\ref{eq(2.803)}), $x_{j}\in B_{(2^{m-1}+2)\cdot2^{-t}}(x)$. By Lemma \ref{lem(2.1)}(c), the cardinality of any $2^{-t}$-packing of $B_{(2^{m-1}+2)\cdot2^{-t}}(x)$ is at most $D_{0}(2^{m-1}+2)^{p}$. Hence $G_{i}$ intersects at most $D_{0}(2^{m-1}+2)^{p}$ elements of $\{E_{2^{-t+1}}^{j}\}_{j=1}^{h}$. Let $A_{2}:=\max\{D_{0}^{q-1}(2^{m-1}+2)^{p(q-1)},1\}$. Since $G_{i}\subset\bigcup_{j:G_{i}\cap E_{2^{-t+1}}^{j}\neq\emptyset}E_{2^{-t+1}}^{j}$, it follows from Lemma \ref{lem(2.2)} that for $q>0$,
\begin{equation}\label{eq(2.10)}
\mu(G_i)^{q}\leq\Bigg(\sum_{j:G_{i}\cap E_{2^{-t+1}}^{j}\neq\emptyset}\mu(E_{2^{-t+1}}^{j})\Bigg)^{q}
\leq A_{2}\sum_{j:G_{i}\cap E_{2^{-t+1}}^{j}\neq\emptyset}\mu(E_{2^{-t+1}}^{j})^{q}.
\end{equation}
Making use of (\ref{eq(2.02)}), we have $E_{2^{-t+1}}^{j}\subset B_{2^{-t+1}}(x_{j})$. By Lemma \ref{lem(2.1)}(b), $B_{2^{-t+1}}(x_{j})$ can be covered by a union of at most $D_{0}2^{p}$ balls of radius $2^{-t}$. It follows from the definition of a good cover that any $2^{-t}$-ball intersects at most $D$ elements of $\{G_{i}\}_{i=1}^{k}$. Hence $E_{2^{-t+1}}^{j}$ intersects at most $DD_{0}2^{p}$ elements of $\{G_{i}\}_{i=1}^{k}$. Thus,
\begin{equation}\label{eq(2.11)}\begin{aligned}
\sum_{i=1}^{k}\mu(G_{i})^q
&\leq A_{2}\sum_{i=1}^{k}\sum_{j:G_{i}\cap E_{2^{-t+1}}^{j}\neq\emptyset}\mu(E_{2^{-t+1}}^{j})^{q}\quad(\text{by}~(\ref{eq(2.10)}))\\
&\leq DD_{0}A_{2}2^{p}\sum_{j=1}^{h}\mu(E_{2^{-t+1}}^{j})^{q}\\
&\leq DD_{0}A_{2}C_{1}^{q}2^{p}\sum_{j=1}^{h}\mu(B_{2^{-t}}(x_{j}))^q\quad(\text{by Proposition}~\ref{prop(2.1)}).
\end{aligned}\end{equation}
Taking supremum on the right side of (\ref{eq(2.11)}), we obtain from  Proposition \ref{prop(2.2)} that
\begin{equation}\label{eq(2.12)}
\sum_{i=1}^{k}\mu(G_{i})^q\leq DD_{0}A_{2}C_{1}^{q}2^{p}\mathcal{S}_{2^{-t}}(q):=c_{1}\mathcal{S}_{2^{-t}}(q).
\end{equation}

On the other hand, it follows from the definition of a good cover that $B_{2^{-t}}(x_{j})$ intersects at most $D$ elements of $\{G_{i}\}_{i=1}^{k}$. Let $A_{3}:=\max\{D^{q-1},1\}$. Since $B_{2^{-t}}(x_{j})\cap K\subset\bigcup_{i:B_{2^{-t}}(x_{j})\cap G_{i}\neq\emptyset}G_{i}$, we obtain from Lemma \ref{lem(2.2)} that for $q>0$,
\begin{equation}\label{eq(2.13)}
\mu(B_{2^{-t}}(x_j))^q\leq\Bigg(\sum_{i:B_{2^{-t}}(x_{j})\cap G_{i}\neq\emptyset}\mu(G_{i})\Bigg)^q
\leq A_{3}\sum_{i:B_{2^{-t}}(x_{j})\cap G_{i}\neq\emptyset}\mu(G_{i})^q.
\end{equation}
Note that $G_{i}\subset B_{2^{m-t-1}}(x)$ for some $x\in X$. Hence if $B_{2^{-t}}(x_j)\cap G_{i}\neq\emptyset$, then by (\ref{eq(2.801)}), $x_{j}\in B_{(2^{m-1}+1)\cdot2^{-t}}(x)$. By Lemma \ref{lem(2.1)}(c), the cardinality of any $2^{-t}$-packing of $B_{(2^{m-1}+1)\cdot2^{-t}}(x)$ is at most $D_{0}(2^{m-1}+1)^{p}$. Hence $G_{i}$ intersects at most $D_{0}(2^{m-1}+1)^{p}$ elements of $\{B_{2^{-t}}(x_{j})\}_{j=1}^{h}$. It follows from (\ref{eq(2.13)}) that
\begin{equation}\label{eq(2.14)}
\sum_{j=1}^{h}\mu(B_{2^{-t}}(x_j))^q\leq A_{3}\sum_{j=1}^{h}\sum_{i:B_{2^{-t}}(x_{j})\cap G_{i}\neq\emptyset}\mu(G_{i})^q
\leq D_{0}A_{3}(2^{m-1}+1)^{p}\sum_{i=1}^{k}\mu(G_{i})^q.
\end{equation}
Taking supremum on the left side of (\ref{eq(2.14)}), we obtain from  Proposition \ref{prop(2.2)} that
\begin{equation}\label{eq(2.15)}
\mathcal{S}_{2^{-t}}(q)\leq D_{0}C_{2}A_{3}(2^{m-1}+1)^{p}\sum_{i=1}^{k}\mu(G_{i})^q:=c_{2}\sum_{i=1}^{k}\mu(G_{i})^q.
\end{equation}
Now the lemma follows by combining (\ref{eq(2.12)}) and (\ref{eq(2.15)}) with $C_{3}:=c_{1}c_{2}\geq1$.
\end{proof}

\begin{rem}\label{rem(2.1)}
Replacing $2^{-t}$, $t\in\mathbb{N}$, by $\delta>0$, we obtain the analog of Lemma \ref{lem(2.3)} with (\ref{eq(2.101)}) taking the form
$$C_{3}^{-1}\mathcal{S}_{\delta}(q)\leq\sum_{i=1}^{k}\mu(G_{i})^{q}\leq C_{3}\mathcal{S}_{\delta}(q).$$
\end{rem}

\section{\bf Conformal iterated function systems on Riemannian manifolds \label{sect.3}}

Let $M$ be a complete $n$-dimensional smooth Riemannian manifold. Denote the Riemannian distance in $M$ by $d(\cdot,\cdot)$. Let $\{(\varphi_{\alpha},U_{\alpha})\}_{\alpha\in \mathcal{A}}$ be an atlas of $M$. We say that $S:M\rightarrow M$ is a \textit{$C^{1+\gamma}$ diffeomorphism} at $x\in M$ if there exist $\alpha,\beta\in \mathcal{A}$ such that $x\in U_{\alpha}$, $S(x)\in U_{\beta}$ and
$$f:=\varphi_{\beta}\circ S\circ\varphi_{\alpha}^{-1}:\mathbb{R}^n\rightarrow\mathbb{R}^n$$
is $C^{1+\gamma}$. We say $S$ is $C^{1+\gamma}$ on $M$ if it is a $C^{1+\gamma}$ diffeomorphism at each $x\in M$. The definition of
$C^{1+\gamma}$ diffeomorphism is independent of the choice of the coordinate charts. Denote the differential of $S$ by $S'$. Then $S'=f'$.

Assume that $U\subset M$ is open and connected, and $W\subset U$ is a compact set with $\overline{{\rm int}~W}=W$. Recall that a map $S:U\rightarrow U$ is called \textit{conformal} if $S'(x)$ is a similarity matrix for any $x\in U$.
Let $\Sigma:=\{1,\dots,\ell\}$, where $\ell\in\mathbb{N}$ and $\ell\geq2$. We say that $\{S_{i}\}_{i=1}^{\ell}$ is a \textit{conformal iterated function system (CIFS)}
on $U$, if\\
(a) $S_{i}:U\rightarrow S_{i}(U)\subset U$ is a conformal $C^{1+\gamma}$ diffeomorphism with $0<\gamma<1$ for any $i\in\Sigma$;\\
(b) $S_{i}(W)\subset W$ for any $i\in\Sigma$;\\
(c) $0<|\det S'_{i}(x)|<1$ for any $i\in\Sigma$ and $x\in U$.

We note here that unlike \cite{Patzschke_1997}, we do not assume that $\{S_{i}\}_{i=1}^{\ell}$ satisfies OSC. Then by Hutchinson \cite{Hutchinson_1981}, there exists a unique nonempty compact set $K\subset W$ called the \textit{self-conformal set} satisfying $K=\bigcup_{i=1}^{\ell}S_{i}(K)$. Moreover, given a probability vector $(p_{1},\dots,p_{\ell})$, there exists a unique Borel probability measure $\mu$ called the \textit{self-conformal measure} such that $\mu=\sum_{i=1}^{\ell}p_{i}\mu\circ S_{i}^{-1}$ and $K={\rm supp}(\mu)$. Let $\Sigma^{\ast}:=\bigcup_{k\geq1}\Sigma^{k}$, where $k\in\mathbb{N}$. For $u=(u_1,\dots,u_k)\in\Sigma^{k}$, let $u^{-}:=(u_{1},\dots,u_{k-1})$ and write $S_u:=S_{u_1}\circ\cdots\circ S_{u_k}$ and $p_u:=p_{u_1}\cdots p_{u_k}$. Define
$$\mathcal{W}_{t}:=\{u\in\Sigma^{\ast}:{\rm diam}(K_{u})\leq2^{-t},~{\rm diam}(K_{u^{-}})>2^{-t}\}.$$
Then for any $t\in\mathbb{N}$,
\begin{equation}\label{eq(2.16)}
\mu=\sum_{u\in \mathcal{W}_{t}}p_{u}\mu\circ S_{u}^{-1}.
\end{equation}

Since $M$ is a manifold, we can find an open and connected set $V$ such that $\overline{V}$ is compact and $W\subset V\subset \overline{V}\subset U$. According to \cite{Patzschke_1997}, the conditions of a CIFS imply the \textit{bounded distortion property}, without assuming any separation condition, i.e., there exists a constant $D_{1}\geq1$ such that for any $u\in\Sigma^{*}$ and $x,y\in V$,
\begin{equation}\label{eq(2.1)}
D_{1}^{-1}\leq\frac{|\det S'_{u}(x)|}{|\det S'_{u}(y)|}\leq D_{1}.
\end{equation}
Let $\|S'_{u}\|:=\sup_{x\in W}|\det S'_{u}(x)|$. Then there exists a constant $D_{2}\geq D_{1}$ such that for any $u\in\Sigma^{\ast}$ and $x,y\in W$,
\begin{equation}\label{eq(2.2)}
D_{2}^{-1}\|S'_{u}\|d(x,y)\leq d(S_{u}(x),S_{u}(y))\leq D_{2}\|S'_{u}\|d(x,y).
 \end{equation}
Let $K_{u}:=S_{u}(K)$. By (\ref{eq(2.2)}), we have
\begin{equation}\label{eq(2.3)}
D_{3}^{-1}\|S'_{u}\|\leq {\rm diam}(K_{u})\leq D_{3}\|S'_{u}\|,
\end{equation}
where $D_{3}\geq1$ is any finite number $\geq\max\{D_{2}{\rm diam}(K),D_{2}/{\rm diam}(K)\}$.

We can use the following lemma to construct a good cover of $K$ from any maximal packing of $K$.
\begin{lem}\label{lem(2.4)}
Let $M$ be a complete $n$-dimensional smooth Riemannian manifold with the doubling property. Let $\{B_{2^{-s-t}}(x_{j})\}_{j=1}^{h}$ be a maximal $2^{-s-t}$-packing of $K$ with $s,t\in\mathbb{N}$, and let $\{E_{2^{-s-t+1}}^{j}\}_{j=1}^{h}$ be defined with respect to $\{B_{2^{-s-t}}(x_{j})\}_{j=1}^{h}$ as in (\ref{eq:E_def}). Then there exists $Q>0$ and $D,N\in\mathbb{N}$ such that for any $t\in\mathbb{N}$, $s\geq N$ and $u\in \mathcal{W}_{t}$, the collection
$\mathcal{C}:=\{S_{u}^{-1}(E_{2^{-s-t+1}}^{j})\cap K\}_{j=1}^{h}$ is a $(Q,2^{-s},D)$-good cover of $K$.
\end{lem}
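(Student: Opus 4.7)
The plan is to verify the three defining properties of a $(Q,2^{-s},D)$-good cover of $K$ for the family $\mathcal{C}$ (implicitly discarding any $j$ for which $S_u^{-1}(E_{2^{-s-t+1}}^j)\cap K=\emptyset$, which preserves the remaining requirements). The quantitative driver of everything is that $\|S_u'\|$ is comparable to $2^{-t}$ for $u\in\mathcal{W}_t$: the upper bound $\|S_u'\|\leq D_3\cdot 2^{-t}$ is immediate from ${\rm diam}(K_u)\leq 2^{-t}$ and (\ref{eq(2.3)}); the matching lower bound $\|S_u'\|\geq c\cdot 2^{-t}$, with some $c>0$ depending only on the CIFS, is obtained by applying (\ref{eq(2.3)}) to $u^-$ (using ${\rm diam}(K_{u^-})>2^{-t}$) and then transferring from $\|S_{u^-}'\|$ to $\|S_u'\|$ via the factorization $S_u=S_{u^-}\circ S_{u_k}$, bounded distortion (\ref{eq(2.1)}), and a uniform positive lower bound for $|S_i'|$ on the compact set $W$.

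With this comparability, the cover property is immediate: for any $z\in K$, $S_u(z)\in K\subset\bigcup_j E_{2^{-s-t+1}}^j$ by (\ref{eq(2.01)}) applied to the maximal packing of $K$, hence $z\in\bigcup_j S_u^{-1}(E_{2^{-s-t+1}}^j)$. For the diameter bound, any two points $z_1,z_2\in S_u^{-1}(E_{2^{-s-t+1}}^j)\cap K\subset W$ satisfy $d(S_u(z_1),S_u(z_2))\leq {\rm diam}(E_{2^{-s-t+1}}^j)\leq 4\cdot 2^{-s-t}$ by (\ref{eq(2.02)}); the lower inequality in (\ref{eq(2.2)}) together with $\|S_u'\|\geq c\cdot 2^{-t}$ yields $d(z_1,z_2)\leq 4D_2c^{-1}\cdot 2^{-s}$, which sets $Q$.

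For the overlap count, fix a $2^{-s}$-ball $B_{2^{-s}}(y)$. If $B_{2^{-s}}(y)\cap K=\emptyset$ the count is zero, so pick $z_0\in B_{2^{-s}}(y)\cap K$. Every index $j$ contributing to the overlap furnishes a witness $z_j\in B_{2^{-s}}(y)\cap S_u^{-1}(E_{2^{-s-t+1}}^j)\cap K\subset W$ with $d(z_0,z_j)<2^{-s+1}$; the upper inequality in (\ref{eq(2.2)}) together with $\|S_u'\|\leq D_3\cdot 2^{-t}$ then gives $d(S_u(z_0),S_u(z_j))\leq 2D_2D_3\cdot 2^{-s-t}$. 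Since $S_u(z_j)\in E_{2^{-s-t+1}}^j\subset B_{2^{-s-t+1}}(x_j)$ by (\ref{eq(2.02)}), the triangle inequality yields $x_j\in B_R(S_u(z_0))$ with $R=(2+2D_2D_3)\cdot 2^{-s-t}$. Because the balls $B_{2^{-s-t}}(x_j)$ are pairwise disjoint, their centers form a $2^{-s-t}$-packing of $B_R(S_u(z_0))$, so Lemma \ref{lem(2.1)}(c) bounds the number of such $x_j$ by $D_0(2+2D_2D_3)^p$, which sets $D$.

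The main obstacle is the uniform lower bound $\|S_u'\|\geq c\cdot 2^{-t}$ for $u\in\mathcal{W}_t$, since the definition of $\mathcal{W}_t$ controls only ${\rm diam}(K_{u^-})$ directly; bounded distortion is essential to transfer that bound to $\|S_u'\|$. The threshold $N$ is then taken large enough so that, for $s\geq N$, the scale $2^{-s-t}$ is small enough that $S_u^{-1}$ is genuinely defined on each $E_{2^{-s-t+1}}^j$ meeting $K_u$, and so that the auxiliary balls appearing in the overlap argument lie inside a region where (\ref{eq(2.2)}) and Lemma \ref{lem(2.1)}(c) apply with the constants $D_0,p,D_2,D_3$ as stated.
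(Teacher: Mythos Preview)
Your proof is correct and follows essentially the same approach as the paper: both arguments hinge on the two-sided comparability $\|S_u'\|\asymp 2^{-t}$ for $u\in\mathcal{W}_t$, obtained exactly as you describe via (\ref{eq(2.3)}) applied to $u^-$ together with bounded distortion (\ref{eq(2.1)}) and the uniform lower bound $\lambda=\min_{i,x\in W}|\det S_i'(x)|>0$. The only cosmetic difference is in the overlap count: the paper pushes the entire ball $B$ forward by $S_u$ (using $s\geq N$ to ensure $B\subset W$), whereas you push forward only witness points $z_0,z_j\in K\subset W$; both routes land the centers $x_j$ in a ball of radius $O(2^{-s-t})$ and finish with Lemma~\ref{lem(2.1)}(c).
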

\begin{proof}
In view of (\ref{eq(2.01)}), $\mathcal{C}$ is obviously a cover of $K$. It follows from (\ref{eq(2.1)}) and (\ref{eq(2.3)}) that
\begin{equation}\label{eq(2.17)}
2^{-t}<{\rm diam}(K_{u^{-}})\leq D_{3}\|S_{u^{-}}^{\prime}\|\leq D_{1}D_{3} \lambda^{-1}\|S_{u}^{\prime}\|,
\end{equation}
where $\lambda:=\min\{|\det S_{i}^{\prime}(x)|:i\leq\ell~\text{and}~x\in W\}>0$.
\draft{For any $u=(u_{1},\dots,u_{k})$,
$$\begin{aligned}
D_{1}|\det S'_{u}(y)|\geq|\det S'_{u}(x)|&=|\det S'_{u^{-}\circ u_{k}}(x)|
=|\det S'_{u^{-}}(S_{u_{k}}(x))\cdot S'_{u_{k}}(x)|\\
&=|\det S'_{u^{-}}(S_{u_{k}}(x))|\cdot |\det S'_{u_{k}}(x)|\\
&\geq \lambda|\det S'_{u^{-}}(S_{u_{k}}(x))|.
\end{aligned}$$
$$\Longrightarrow D_{1}\parallel S'_{u}\parallel\geq \lambda\parallel S'_{u^{-}}\parallel$$
$$\Longrightarrow D_{1}\lambda^{-1}\parallel S'_{u}\parallel\geq\parallel S'_{u^{-}}\parallel$$}
Hence we obtain from (\ref{eq(2.2)}) and (\ref{eq(2.17)}) that for
any $j\in\{1,\dots,h\}$,
$${\rm diam}(S_{u}^{-1}(E_{2^{-s-t+1}}^{j})\cap K)\leq D_{2}\|S'_{u}\|^{-1}{\rm diam}(E_{2^{-s-t+1}}^{j})\leq D_{1}D_{2}D_{3}\lambda^{-1}2^{t}\cdot2^{-s-t+2}
:=Q\cdot2^{-s}.$$
This proves the first property of a good cover.

Let $B$ be any  $2^{-s}$-ball satisfying $B\cap K\neq\emptyset$; otherwise the second property of a good cover holds trivially.
Without loss of generality, we can assume that $s\geq N$ is large enough such that $B\subset W$.
It follows from (\ref{eq(2.2)}) and (\ref{eq(2.3)}) that
$${\rm diam}(S_{u}(B))\leq D_{2}\|S'_{u}\|{\rm diam}(B)
\leq D_{2}D_{3}{\rm diam}(K_{u}){\rm diam}(B)
\leq D_{2}D_{3}2^{-s-t+1}.$$
Consequently, $S_{u}(B)\subset B_{D_{2}D_{3}2^{-s-t}}(x)$ for some $x\in M$. For any $j\in\{1,\dots,h\}$,
if $E_{2^{-s-t+1}}^{j}\cap S_{u}(B)\neq\emptyset$, then by (\ref{eq(2.803)}), $x_{j}\in B_{(D_{2}D_{3}+2)2^{-s-t}}(x)$.
By Lemma \ref{lem(2.1)}(c), the cardinality of
any $2^{-s-t}$-packing of $B_{(D_{2}D_{3}+2)2^{-s-t}}(x)$ is at most $D_{0}(D_{2}D_{3}+2)^{p}$. Hence $S_{u}(B)$ intersects at most $D_{0}(D_{2}D_{3}+2)^{p}$ elements of $\{E_{2^{-s-t+1}}^{j}\}_{j=1}^{h}$. Therefore, $B$ intersects at most $D_{0}(D_{2}D_{3}+2)^{p}$ elements of $\mathcal{C}$. This completes the proof.
\end{proof}

\section{\bf An equivalent definition of \boldmath$L^q$-dimension \label{sect.4}}

In this section, we prove that for any $q\in(0,\infty)\setminus\{1\}$, the definitions of the $L^{q}$-dimension in (\ref{eq:tau_def}) and (\ref{eq:GD_def}) are equivalent. We first give a definition.

\begin{defi}\label{defi(4.1)}(\cite{Guysinsky-Yaskolko_1997})
Let $0<\lambda<1$, $\delta>0$, and $X$ be a metric space. A partition $\{P_{i}\}_{i=1}^{\infty}$ of $X$ is called a $\boldsymbol{(\lambda,\delta)}$-\textbf{grid} if there exists a sequence $\{x_{i}\}_{i=1}^{\infty}$ in $X$ such that for any $i\in\mathbb{N}$,
$$B_{\lambda\delta}(x_{i})\subset P_{i}\subset B_{\delta}(x_{i}).$$
\end{defi}

Let $\mu$ be a finite positive Borel measure on $X$. Fix $0<\lambda<1$ and for any $\delta>0$, let $\{P_{i}\}_{i=1}^{\infty}$ be a $(\lambda,\delta)$-grid partition of $X$. The \textit{Renyi dimension} in \cite{Guysinsky-Yaskolko_1997} is defined as
$$\lim_{\delta\rightarrow0^{+}}\frac{\log\sup\sum_{i=1}^{\infty}\mu(P_{i})^{q}}{(q-1)\log\delta},\quad q\neq1,$$
if the limit exists, where the supremum is taken over all $(\lambda,\delta)$-grid partitions of $X$. We have the following proposition.

\begin{prop}\label{prop(4.1)}
Let $X$ and $\mu$  be as in Theorem \ref{thm(1.0)}. Then for any $q\in(0,\infty)\setminus\{1\}$, the Renyi dimension is equivalent to the $L^{q}$-dimension defined by (\ref{eq:tau_def}).
\end{prop}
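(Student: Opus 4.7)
The plan is to realize each $(\lambda,\delta)$-grid partition of $X$, after discarding cells that do not meet $K := {\rm supp}(\mu)$, as a $(Q,\delta,D)$-good cover of $K$ in the sense of Definition~\ref{defi(2.1)}, and then invoke Remark~\ref{rem(2.1)} to obtain a two-sided comparison between $\sup\sum_i\mu(P_i)^q$ and $\mathcal{S}_\delta(q)$ with constants independent of $\delta$. Taking logarithms and dividing by $(q-1)\log\delta$ then immediately yields the equivalence of the Renyi dimension and the $L^q$-dimension defined via (\ref{eq:qdim_def}).

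Fix $\lambda \in (0,1)$, $\delta > 0$, and let $\{P_i\}_{i=1}^\infty$ be a $(\lambda,\delta)$-grid partition of $X$ with associated centers $\{x_i\}_{i=1}^\infty$. Put $J := \{i : P_i \cap K \neq \emptyset\}$. Since $K = {\rm supp}(\mu)$, cells indexed outside $J$ have $\mu$-measure zero, so for $q > 0$ we have $\sum_{i=1}^\infty \mu(P_i)^q = \sum_{i \in J}\mu(P_i)^q$. Moreover, as $K$ is compact and for $i \in J$ the ball $B_\delta(x_i) \supset P_i$ must meet $K$, all such $x_i$ lie in a fixed bounded neighborhood of $K$; the pairwise disjointness of the balls $\{B_{\lambda\delta}(x_i)\}_{i \in J}$ together with Lemma~\ref{lem(2.1)}(c) then forces $J$ to be finite.

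Next, verify the three conditions of Definition~\ref{defi(2.1)}. The inclusion $P_i \subset B_\delta(x_i)$ gives ${\rm diam}(P_i) \leq 2\delta$, so $Q := 2$ works, and each $P_i$ with $i \in J$ meets $K$ by construction. For the intersection count, let $B_\delta(y)$ be any $\delta$-ball. If $B_\delta(y) \cap P_i \neq \emptyset$ then $B_\delta(y) \cap B_\delta(x_i) \neq \emptyset$, whence $x_i \in B_{2\delta}(y)$. Since $\{P_i\}$ is a partition, the balls $\{B_{\lambda\delta}(x_i)\}$ are pairwise disjoint, so (shrinking by a factor of $2$ if necessary) $\{B_{\lambda\delta/2}(x_i) : x_i \in B_{2\delta}(y)\}$ is a $(\lambda\delta/2)$-packing of a ball of radius on the order of $\delta$. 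By Lemma~\ref{lem(2.1)}(c), the number of such $x_i$ is bounded by a constant $D = D(\lambda, D_0, p)$ independent of $\delta$ and $y$. Hence $\{P_i\}_{i \in J}$ is a $(2,\delta,D)$-good cover of $K$.

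Applying Remark~\ref{rem(2.1)} supplies a constant $C_3 \geq 1$, independent of $\delta$ and of the chosen grid partition, such that
\[
C_3^{-1}\mathcal{S}_\delta(q) \;\leq\; \sum_{i \in J}\mu(P_i)^q \;\leq\; C_3\,\mathcal{S}_\delta(q).
\]
Taking the supremum over all $(\lambda,\delta)$-grid partitions preserves this two-sided bound. Taking logarithms, dividing by $(q-1)\log\delta$, and letting $\delta \to 0^+$, the contribution of $C_3$ vanishes, so the upper and lower limits defining the Renyi dimension coincide with those defining $\tau(q)/(q-1)$: one exists if and only if the other does, with equal values. The main technical step is the intersection-count estimate, which rests on the lower inclusion $B_{\lambda\delta}(x_i) \subset P_i$ from the grid definition (to force separation of the centers) combined with Lemma~\ref{lem(2.1)}(c); once this is in hand the proposition is an immediate consequence of the good-cover machinery developed in Section~\ref{sect.2}.
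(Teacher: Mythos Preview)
Your proof is correct and follows essentially the same route as the paper's: both show that the cells of a $(\lambda,\delta)$-grid meeting $K$ form a $(2,\delta,D)$-good cover of $K$ (using the inclusion $B_{\lambda\delta}(x_i)\subset P_i$ and Lemma~\ref{lem(2.1)}(c) to bound the intersection multiplicity), and then invoke Remark~\ref{rem(2.1)}. Your extra verification that $J$ is finite is a nice touch the paper leaves implicit; the ``shrink by a factor of $2$'' step is unnecessary under the paper's definition of packing (only centers need to lie in the set), but it is harmless.
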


\begin{proof}
Let $\{P_{i}\}_{i=1}^{\infty}$ be a $(\lambda,\delta)$-grid partition of $X$. Then $K\subset\bigcup_{i:P_{i}\cap K\neq\emptyset}P_{i}$. It follows from definition that $P_{i}\subset B_{\delta}(x_{i})$ and ${\rm diam}(P_{i})\leq2\delta$ for any $i\in\mathbb{N}$.  Hence for any $B_{\delta}(x)\subset X$, if $P_{i}\cap B_{\delta}(x)\neq\emptyset$, then by (\ref{eq(2.801)}), $x_{i}\in B_{2\delta}(x)$. Note that any $P_{i}$ contains
a $\lambda\delta$-ball. By Lemma \ref{lem(2.1)}(c), the cardinality of any $\lambda\delta$-packing of $B_{2\delta}(x)$ is at most $D_{0}2^{p}\lambda^{-p}$. Hence $B_{\delta}(x)$ intersects at most $D_{0}2^{p}\lambda^{-p}$ elements in $\{P_{i}\}_{i=1}^{\infty}$. It follows from the definition of a good cover that $\{P_{i}:P_{i}\cap K\neq\emptyset,~i\in\mathbb{N}\}$ is a $(2,\delta,D_{0}2^{p}\lambda^{-p})$-good cover of $K$. By Remark \ref{rem(2.1)}, there exists a constant $C_{3}\geq1$ such that for any $q>0$,
$$C_{3}^{-1}\mathcal{S}_{\delta}(q)\leq\sum_{i:P_{i}\cap K\neq\emptyset}\mu(P_{i})^{q}\leq C_{3}\mathcal{S}_{\delta}(q).$$
The proposition follows.
\end{proof}

\begin{proof}[Proof of Theorem \ref{thm(1.0)}]
The case $q>1$ follows from Proposition \ref{prop(4.1)} and \cite{Guysinsky-Yaskolko_1997} (see Theorem 4.1). We  only need to prove the case $0<q<1$.  Assume that
$\{B_{\delta}(x_{j})\}_{j=1}^{h}$ is a heavy maximal $\delta$-packing of $K$, and $\{\mathcal{E}_{2\delta}^{j}\}_{j=1}^{h}$ is the heavy maximal $4\delta$-partition of $K$ generated by $\{B_{\delta}(x_{j})\}_{j=1}^{h}$. For any $j\in\{1,\dots,h\}$, making use of (\ref{eq(2.06)}), we have ${\rm diam}(\mathcal{E}_{2\delta}^{j})\leq 4\delta$. For any $x\in \mathcal{E}_{2\delta}^{j}$, it follows that $\mathcal{E}_{2\delta}^{j}\subset B_{4\delta}(x)$. Hence for $0<q<1$,
\begin{equation}\label{eq(4.0)}
\mu(\mathcal{E}_{2\delta}^{j})^{q-1}\geq\mu(B_{4\delta}(x))^{q-1}.
\end{equation}
Thus,
\begin{equation}\label{eq(4.1)}
\begin{aligned}
\int_{K}\mu(B_{4\delta}(x))^{q-1}\,d\mu&=\sum_{j=1}^{h}\int_{\mathcal{E}_{2\delta}^{j}}\mu(B_{4\delta}(x))^{q-1}\,d\mu\\
&\leq\sum_{j=1}^{h}\int_{\mathcal{E}_{2\delta}^{j}}\mu(\mathcal{E}_{2\delta}^{j})^{q-1}\,d\mu\quad\text{(by (\ref{eq(4.0)}))}\\
&\leq C_{1}^{q}\sum_{j=1}^{h}\mu(B_{\delta}(x_{j}))^{q}\quad\text{(by (\ref{eq(2.04)}) and Proposition \ref{prop(2.1)})}.
\end{aligned}
\end{equation}
To prove the reverse inequality, we first use (\ref{eq(2.804)}) to get
$$B_{\delta}(x)\subset B_{3\delta}(x_{j})\subset\bigcup_{i:B_{3\delta}(x_{j})\cap \mathcal{E}_{2\delta}^{i}\neq\emptyset}\mathcal{E}_{2\delta}^{i}.$$
Let $\tilde{\mathcal{E}}_{2\delta}^{j}:=\bigcup_{i:i\neq j,B_{3\delta}(x_{j})\cap \mathcal{E}_{2\delta}^{i}\neq\emptyset}\mathcal{E}_{2\delta}^{i}$. Then
$B_{\delta}(x)\subset \mathcal{E}_{2\delta}^{j}\cup \tilde{\mathcal{E}}_{2\delta}^{j}$. Hence for $0<q<1$,
\begin{equation}\label{eq(4.00)}
\mu(B_{\delta}(x))^{q-1}\geq \big(\mu(\mathcal{E}_{2\delta}^{j})+\mu(\tilde{\mathcal{E}}_{2\delta}^{j})\big)^{q-1}.
\end{equation}
Thus,
\begin{equation}\label{eq(4.2)}
\begin{aligned}
\int_{K}\mu(B_{\delta}(x))^{q-1}\,d\mu&=\sum_{j=1}^{h}\int_{\mathcal{E}_{2\delta}^{j}}\mu(B_{\delta}(x))^{q-1}\,d\mu\\
&\geq\sum_{j=1}^{h}\int_{\mathcal{E}_{2\delta}^{j}}\big(\mu(\mathcal{E}_{2\delta}^{j})+\mu(\tilde{\mathcal{E}}_{2\delta}^{j})\big)^{q-1}\,d\mu\quad\text{(by (\ref{eq(4.00)}))}\\
&=\sum_{j=1}^{h}\frac{\mu(\mathcal{E}_{2\delta}^{j})}{\big(\mu(\mathcal{E}_{2\delta}^{j})+\mu(\tilde{\mathcal{E}}_{2\delta}^{j})\big)^{1-q}}.
\end{aligned}
\end{equation}
Let $L:=(2D_{0}5^{p})^{\frac{1}{q}}$, and
$$I:=\Bigg\{j:0\leq\frac{\mu(\tilde{\mathcal{E}}_{2\delta}^{j})}{L}<\mu(\mathcal{E}_{2\delta}^{j}),1\leq j\leq h\Bigg\},
\quad J:=\Bigg\{j:0<\mu(\mathcal{E}_{2\delta}^{j})\leq\frac{\mu(\tilde{\mathcal{E}}_{2\delta}^{j})}{L},1\leq j\leq h\Bigg\}.$$
Then
\begin{equation}\label{eq(4.3)}
\sum_{j=1}^{h}\frac{\mu(\mathcal{E}_{2\delta}^{j})}{\big(\mu(\mathcal{E}_{2\delta}^{j})+\mu(\tilde{\mathcal{E}}_{2\delta}^{j})\big)^{1-q}}
\geq\sum_{j\in I}\frac{\mu(\mathcal{E}_{2\delta}^{j})}{\big(\mu(\mathcal{E}_{2\delta}^{j})+\mu(\tilde{\mathcal{E}}_{2\delta}^{j})\big)^{1-q}}\geq(1+L)^{q-1}\sum_{j\in I}\mu(\mathcal{E}_{2\delta}^{j})^{q}.
\end{equation}
Lemma \ref{lem(2.2)} implies that for $0<q<1$,
\begin{equation}\label{eq(4.4)}
\sum_{j\in J}\mu(\mathcal{E}_{2\delta}^{j})^{q}\leq L^{-q}\sum_{j\in J}\mu(\tilde{\mathcal{E}}_{2\delta}^{j})^{q}
\leq L^{-q}\sum_{j\in J}\sum_{i:i\neq j,B_{3\delta}(x_{j})\cap \mathcal{E}_{2\delta}^{i}\neq\emptyset}\mu(\mathcal{E}_{2\delta}^{i})^{q}.
\end{equation}
If $B_{3\delta}(x_{j})\cap \mathcal{E}_{2\delta}^{i}\neq\emptyset$, then by (\ref{eq(2.803)}), $x_{j}\in B_{5\delta}(x_{i})$. By Lemma \ref{lem(2.1)}(c), the cardinality of any $\delta$-packing of
$B_{5\delta}(x_{i})$ is at most $D_{0}5^{p}$. Hence $\mathcal{E}_{2\delta}^{i}$ intersects at most $D_{0}5^{p}$ elements of $\{B_{3\delta}(x_{j})\}_{j=1}^{h}$. Thus, by (\ref{eq(4.4)}),
\begin{equation}\label{eq(4.41)}
\sum_{j\in J}\mu(\mathcal{E}_{2\delta}^{j})^{q}
\leq L^{-q}\sum_{j\in J}\sum_{i:B_{3\delta}(x_{j})\cap \mathcal{E}_{2\delta}^{i}\neq\emptyset}\mu(\mathcal{E}_{2\delta}^{i})^{q}
\leq D_{0}L^{-q}5^{p}\sum_{j=1}^{h}\mu(\mathcal{E}_{2\delta}^{j})^{q}=\frac{1}{2}\sum_{j=1}^{h}\mu(\mathcal{E}_{2\delta}^{j})^{q}.
\end{equation}
Making use of (\ref{eq(4.41)}), we have
\begin{equation}\label{eq(4.5)}
\sum_{j\in I}\mu(\mathcal{E}_{2\delta}^{j})^{q}
=\sum_{j=1}^{h}\mu(\mathcal{E}_{2\delta}^{j})^{q}-\sum_{j\in J}\mu(\mathcal{E}_{2\delta}^{j})^{q}\geq\frac{1}{2}\sum_{j=1}^{h}\mu(\mathcal{E}_{2\delta}^{j})^{q}.
\end{equation}
For any $j\in\{1,\dots,h\}$, we obtain from (\ref{eq(2.02)}) and (\ref{eq(2.04)}) that
\begin{equation}\label{eq(4.51)}
\mu(B_{\delta}(x_{j}))\leq\mu(\mathcal{E}_{2\delta}^{j}).
\end{equation}
It follows from (\ref{eq(4.2)}), (\ref{eq(4.3)}), (\ref{eq(4.5)}) and (\ref{eq(4.51)}) that
\begin{equation}\label{eq(4.6)}
\int_{K}\mu(B_{\delta}(x))^{q-1}\,d\mu\geq\frac{1}{2}(1+L)^{q-1}\sum_{j=1}^{h}\mu(\mathcal{E}_{2\delta}^{j})^{q}
\geq\frac{1}{2}(1+L)^{q-1}\sum_{j=1}^{h}\mu(B_{\delta}(x_{j}))^{q}.
\end{equation}
Combining (\ref{eq(4.1)}) and (\ref{eq(4.6)}) yields the theorem.
\end{proof}

\section{\bf Existence of the limit defining \boldmath$\tau(q)$ \label{sect.5}}

This section is devoted to the proof of Theorem \ref{thm(1.1)}.

\begin{proof}[Proof of Theorem \ref{thm(1.1)}]
Assume that $\{B_{2^{-t}}(x_{i})\}_{i=1}^{k}$ and $\{B_{2^{-s-t}}(y_{j})\}_{j=1}^{h}$ are heavy maximal $2^{-t}$ and
$2^{-s-t}$-packings of $K$ with $s,t\in\mathbb{N}$, respectively. Let $\{E_{2^{-t+1}}^{i}\}_{i=1}^{k}$ and $\{E_{2^{-s-t+1}}^{j}\}_{j=1}^{h}$ be defined with respect to $\{B_{2^{-t}}(x_{i})\}_{i=1}^{k}$ and $\{B_{2^{-s-t}}(y_{j})\}_{j=1}^{h}$ as in (\ref{eq:E_def}), respectively.
Combining Lemmas \ref{lem(2.3)} and \ref{lem(2.4)}, we obtain a constant $C_{3}\geq1$ independent of $s,t$ such that
\begin{equation}\label{eq(3.1)}
C_{3}^{-1}\mathcal{S}_{2^{-s}}(q)\leq\sum_{j=1}^{h}\mu(S_{u}^{-1}(E_{2^{-s-t+1}}^{j}))^{q}\leq C_{3}\mathcal{S}_{2^{-s}}(q)
\end{equation}
for any $t\in\mathbb{N}$, $s\geq N$ and $u\in \mathcal{W}_{t}$, where $N\in\mathbb{N}$. For convenience, write
$\mathcal{B}_{2^{-t}}:=\{B_{2^{-t}}(x_{i})\}_{i=1}^{k}$. For $B\in2\mathcal{B}_{2^{-t}}$, let $x$ be the center of $B$. For any $j\in\{1,\dots,h\}$, by (\ref{eq(2.02)}), we have
\begin{equation}\label{eq(3.111)}
E_{2^{-s-t+1}}^{j}\subset B_{2^{-s-t+1}}(y_{j})\subset B_{2^{-t}}(y_{j}).
\end{equation}
It follows from (\ref{eq(3.111)}) that if $E_{2^{-s-t+1}}^{j}\cap B\neq\emptyset$, then $B_{2^{-t}}(y_{j})\cap B\neq\emptyset$. Hence by (\ref{eq(2.801)}) and (\ref{eq(2.802)}), we have
\begin{equation}\label{eq(3.112)}
x\in B_{3\cdot2^{-t}}(y_{j})\quad\text{and}\quad E_{2^{-s-t+1}}^{j}\subset2*B.
\end{equation}
We divide the proof into two cases.

Case 1. $q\geq1$. We will show that there exists a constant $L\geq1$ such that
\begin{equation}\label{eq(3.2)}
\mathcal{S}_{2^{-s-t}}(q)\leq L\mathcal{S}_{2^{-s}}(q)\mathcal{S}_{2^{-t}}(q)
\end{equation}
for any $t\in\mathbb{N}$ and $s\geq N$, where $N\in\mathbb{N}$. That is, the sequence $\{L\mathcal{S}_{2^{-t}}(q)\}_{t\in\mathbb{N}}$ is sub-multiplicative. Hence the limit on the right side of (\ref{eq(2.8)}) exists, which means that the $L^{q}$-spectrum of $\mu$ exists.
Moreover, $\tau(q)=\sup_{t\geq N}\log(L\mathcal{S}_{2^{-t}}(q))/(-t\log2)$. We obtain from (\ref{eq(2.16)}) and (\ref{eq(3.112)}) that
\begin{equation}\label{eq(3.3)}
\mu(E_{2^{-s-t+1}}^{j})=\sum_{u\in \mathcal{W}_{t}:K_{u}\cap2*B\neq\emptyset}p_{u}\mu(S_{u}^{-1}(E_{2^{-s-t+1}}^{j})).
\end{equation}
For brevity, we will omit $u\in \mathcal{W}_{t}$ in the subscript for summation. Let
\begin{equation}\label{eq(3.31)}
P_{+}(B):=\sum_{u:K_{u}\cap2*B\neq\emptyset}p_{u}.
\end{equation}
Since $x^{q}$ is convex for $q\geq1$,
\begin{equation}\label{eq(3.4)}\begin{aligned}
\mu(E_{2^{-s-t+1}}^{j})^{q}
&=P_{+}(B)^{q}\Bigg(\sum_{u:K_{u}\cap2*B\neq\emptyset}\frac{p_{u}}{P_{+}(B)}\mu(S_{u}^{-1}(E_{2^{-s-t+1}}^{j}))\Bigg)^{q}\quad\text{(by (\ref{eq(3.3)}))}\\
&\leq P_{+}(B)^{q-1}\sum_{u:K_{u}\cap2*B\neq\emptyset}p_{u}\mu(S_{u}^{-1}(E_{2^{-s-t+1}}^{j}))^{q}\quad\text{(by (\ref{eq(3.31)}) and Jensen's inequality)}.
\end{aligned}\end{equation}
Summing over all $E_{2^{-s-t+1}}^{j}\cap B\neq\emptyset$, we have
\begin{equation}\label{eq(3.5)}\begin{aligned}
\sum_{j:E_{2^{-s-t+1}}^{j}\cap B\neq\emptyset}\mu(E_{2^{-s-t+1}}^{j})^{q}
&\leq P_{+}(B)^{q-1}\sum_{u:K_{u}\cap2*B\neq\emptyset} p_{u}
\sum_{j:E_{2^{-s-t+1}}^{j}\cap B\neq\emptyset}\mu(S_{u}^{-1}(E_{2^{-s-t+1}}^{j}))^{q}\quad\text{(by (\ref{eq(3.4)}))}\\
&\leq P_{+}(B)^{q-1}\sum_{u:K_{u}\cap2*B\neq\emptyset}p_{u}\cdot C_{3}\mathcal{S}_{2^{-s}}(q)\quad\text{(by (\ref{eq(3.1)}))}\\
&=C_{3}\mathcal{S}_{2^{-s}}(q)P_{+}(B)^{q}.
\end{aligned}\end{equation}
For any $j\in\{1,\dots,h\}$, by (\ref{eq(2.02)}), we have $B_{2^{-s-t}}(x_{j})\subset E_{2^{-s-t+1}}^{j}$. Summing over all $B\in2\mathcal{B}_{2^{-t}}$, it follows from (\ref{eq(3.5)}) that
\begin{equation}\label{eq(3.6)}
\sum_{j=1}^{h}\mu(B_{2^{-s-t}}(y_{j}))^{q}\leq \sum_{B\in2\mathcal{B}_{2^{-t}}}\sum_{j:E_{2^{-s-t+1}}^{j}\cap B\neq\emptyset}\mu(E_{2^{-s-t+1}}^{j})^{q}
\leq C_{3}\mathcal{S}_{2^{-s}}(q)\displaystyle{\sum_{B\in2\mathcal{B}_{2^{-t}}}}P_{+}(B)^{q}.
\end{equation}
Taking supremum on the left side of (\ref{eq(3.6)}), it follows from  Proposition \ref{prop(2.2)} that
\begin{equation}\label{eq(3.7)}
\mathcal{S}_{2^{-s-t}}(q)\leq C_{2}C_{3}\mathcal{S}_{2^{-s}}(q)\displaystyle{\sum_{B\in2\mathcal{B}_{2^{-t}}}}P_{+}(B)^{q}
:=c_{3}\mathcal{S}_{2^{-s}}(q)\displaystyle{\sum_{B\in2\mathcal{B}_{2^{-t}}}}P_{+}(B)^{q}.
\end{equation}
Note that ${\rm diam}(K_{u})\leq2^{-t}$ for any $u\in \mathcal{W}_{t}$. Hence for $B\in2\mathcal{B}_{2^{-t}}$, if $K_{u}\cap2*B\neq\emptyset$, then by (\ref{eq(2.802)}), $K_{u}\subset \frac{5}{2}*B$. For any $i\in\{1,\dots,k\}$, if $E_{2^{-t+1}}^{i}\cap \frac{5}{2}*B\neq\emptyset$, then it follows from (\ref{eq(2.803)}) that
\begin{equation}\label{eq(3.81)}
x_{i}\in \frac{7}{2}*B\quad\text{and}\quad x\in B_{7\cdot2^{-t}}(x_{i}),
\end{equation}
where $x$ is the center of $B$. By Lemma \ref{lem(2.1)}(c), the cardinality of any $2^{-t}$-packing of $\frac{7}{2}*B$ is at most $D_{0}7^{p}$. Hence $\frac{5}{2}*B$ intersects at most $D_{0}7^{p}$ elements of $\{E_{2^{-t+1}}^{i}\}_{i=1}^{k}$. Let
$A_{4}:=\max\{D_{0}^{q-1}7^{p(q-1)},1\}$. Since $\frac{5}{2}*B\subset\bigcup_{i:\frac{5}{2}*B\cap E_{2^{-t+1}}^{i}\neq\emptyset}E_{2^{-t+1}}^{i}$, it follows from Lemma \ref{lem(2.2)} that for $q>0$,
\begin{equation}\label{eq(3.8)}
P_{+}(B)^{q}\leq \Bigg(\sum_{i:\frac{5}{2}*B\cap E_{2^{-t+1}}^{i} \neq\emptyset}\mu(E_{2^{-t+1}}^{i})\Bigg)^{q}
\leq A_{4}\sum_{i:\frac{5}{2}*B\cap E_{2^{-t+1}}^{i} \neq\emptyset}\mu(E_{2^{-t+1}}^{i})^{q}.
\end{equation}
By Lemma \ref{lem(2.1)}(c), the cardinality of any $2^{-t}$-packing of $B_{7\cdot2^{-t}}(x_{i})$ is at most $D_{0}7^{p}$. In view of this and (\ref{eq(3.81)}), we see that $E_{2^{-t+1}}^{i}$ intersects at most $D_{0}7^{p}$ elements of $5\mathcal{B}_{2^{-t}}$. Thus,
\begin{equation}\label{eq(3.9)}\begin{aligned}
\sum_{B\in2\mathcal{B}_{2^{-t}}}P_{+}(B)^{q}
&\leq  A_{4}\sum_{B\in2\mathcal{B}_{2^{-t}}}\sum_{i:\frac{5}{2}*B\cap E_{2^{-t+1}}^{i}\neq\emptyset}\mu(E_{2^{-t+1}}^{i})^{q}
\quad\text{(by (\ref{eq(3.8)}))}\\
&\leq D_{0}A_{4}7^{p}\sum_{i=1}^{k}\mu(E_{2^{-t+1}}^{i})^{q}\\
&\leq D_{0}A_{4}C_{1}^{q}7^{p}\sum_{i=1}^{k}\mu(B_{2^{-t}}(x_{i}))^q\quad\text{(by Proposition \ref{prop(2.1)})}.
\end{aligned}\end{equation}
Taking supremum on the right side of (\ref{eq(3.9)}), we obtain from Proposition \ref{prop(2.2)} that
\begin{equation}\label{eq(3.10)}
\sum_{B\in2\mathcal{B}_{2^{-t}}}P_{+}(B)^{q}\leq D_{0}A_{4}C_{1}^{q}7^{p}\mathcal{S}_{2^{-t}}(q):=c_{4}\mathcal{S}_{2^{-t}}(q).
\end{equation}
Combining (\ref{eq(3.7)}) and (\ref{eq(3.10)}) yields (\ref{eq(3.2)}), where $L:=c_{3}c_{4}\geq1$.

Case 2. $0<q<1$. We will show that there exists a constant $L\geq1$ such that
\begin{equation}\label{eq(3.11)}
\mathcal{S}_{2^{-s-t}}(q)\geq L^{-1}\mathcal{S}_{2^{-s}}(q)\mathcal{S}_{2^{-t}}(q)
\end{equation}
for any $t\in\mathbb{N}$ and $s\geq N$, where $N\in\mathbb{N}$. That is, the sequence $\{L^{-1}\mathcal{S}_{2^{-t}}(q)\}_{t\in\mathbb{N}}$ is super-multiplicative. Hence the limit on the right side of (\ref{eq(2.8)}) exists, which means that the $L^{q}$-spectrum of $\mu$ exists. Moreover, $\tau(q)=\inf_{t\geq N}\log(L^{-1}\mathcal{S}_{2^{-t}}(q))/(-t\log2)$. For
$B\in2\mathcal{B}_{2^{-t}}$ and $u\in \mathcal{W}_{t}$, let
$$w(u,B):=\sum_{j:B\cap E_{2^{-s-t+1}}^{j}\neq\emptyset}\mu(S_{u}^{-1}(E_{2^{-s-t+1}}^{j}))^{q}.$$
Following \cite{Peres-Solomyak_2000}, we call a ball $B\in2\mathcal{B}_{2^{-t}}$ such that $w(u,B)$ attains its maximum \textit{$q$-heavy}. For each $u\in\mathcal W_t$, we fix a $q$-heavy ball and denote it by $H(u)$. Note that for any $u\in\mathcal{W}_{t}$, we have
$K_{u}\subset B_{2^{-t}}(y)$ for any $y\in K_{u}$. It follows from (\ref{eq(3.111)}) that if $E_{2^{-s-t+1}}^{j}\cap K_{u}\neq\emptyset$, then $B_{2^{-t}}(y_{j})\cap B_{2^{-t}}(y)\neq\emptyset$. Hence by (\ref{eq(2.801)}), we have $y_{j}\in B_{2^{-t+1}}(y)$.
By Lemma \ref{lem(2.1)}(c), the cardinality of any $2^{-t}$-packing of $B_{2^{-t+1}}(y)$ is at most $D_{0}2^{p}$.
Hence $K_{u}$ intersects at most $D_{0}2^{p}$ elements of $\{E_{2^{-s-t+1}}^{j}\}_{j=1}^{h}$. Combining this fact and (\ref{eq(3.1)}), we have for any $u\in\mathcal{W}_{t}$,
\begin{equation}\label{eq(3.12)}
D_{0}2^{p}\sum_{j:H(u)\cap E_{2^{-s-t+1}}^{j}\neq\emptyset}\mu(S_{u}^{-1}(E_{2^{-s-t+1}}^{j}))^{q}
\geq\sum_{j=1}^{h}\mu(S_{u}^{-1}(E_{2^{-s-t+1}}^{j}))^{q}\geq C_{3}^{-1}\mathcal{S}_{2^{-s}}(q).
\end{equation}
It follows from (\ref{eq(3.112)}) and (\ref{eq(3.3)}) that
\begin{equation}\label{eq(3.13)}
\mu(E_{2^{-s-t+1}}^{j})=\sum_{u:K_{u}\cap 2*B\neq\emptyset}p_{u}\mu(S_{u}^{-1}(E_{2^{-s-t+1}}^{j}))
\geq\sum_{u:B=H(u)}p_{u}\mu(S_{u}^{-1}(E_{2^{-s-t+1}}^{j})).
\end{equation}
Let
\begin{equation}\label{eq(3.131)}
P_{-}(B):=\sum_{u:B=H(u)}p_{u}.
\end{equation}
Observe that $x^{q}$ is concave for $0<q<1$. Hence
\begin{equation}\label{eq(3.14)}\begin{aligned}
\mu(E_{2^{-s-t+1}}^{j})^{q}&\geq P_{-}(B)^{q}\Bigg(\sum_{u:B=H(u)}\frac{p_{u}}{P_{-}(B)}\mu(S_{u}^{-1}(E_{2^{-s-t+1}}^{j}))\Bigg)^{q}
\quad\text{(by (\ref{eq(3.13)}))}\\
&\geq P_{-}(B)^{q-1}\displaystyle{\sum_{u:B=H(u)}}p_{u}\mu(S_{u}^{-1}(E_{2^{-s-t+1}}^{j}))^{q}
\quad\text{(by (\ref{eq(3.131)}) and Jensen's inequality)}.
\end{aligned}\end{equation}
Summing over all $ E_{2^{-s-t+1}}^{j}\cap B\neq\emptyset$, we have
\begin{equation}\label{eq(3.15)}
\begin{aligned}
\sum_{j:B\cap E_{2^{-s-t+1}}^{j}\neq\emptyset}\mu(E_{2^{-s-t+1}}^{j})^{q}
&\geq P_{-}(B)^{q-1}\sum_{u:B=H(u)}p_{u}\sum_{j:B\cap E_{2^{-s-t+1}}^{j}\neq\emptyset}\mu(S_{u}^{-1}(E_{2^{-s-t+1}}^{j}))^{q}
\quad\text{(by (\ref{eq(3.14)}))}\\
&\geq P_{-}(B)^{q}\cdot D_{0}^{-1}C_{3}^{-1}3^{-p}\mathcal{S}_{2^{-s}}(q)\quad\text{(by (\ref{eq(3.12)}))}.
\end{aligned}
\end{equation}
In view of (\ref{eq(3.112)}), if $B\cap E_{2^{-s-t+1}}^{j}\neq\emptyset$, then $x\in B_{3\cdot2^{-t}}(y_{j})$, where $x$ is the center of $B$.
By Lemma \ref{lem(2.1)}(c), the cardinality of any $2^{-t}$-packing of $B_{3\cdot2^{-t}}(y_{j})$ is at most $D_{0}3^{p}$. Hence $E_{2^{-s-t+1}}^{j}$ intersects at most $D_{0}3^{p}$ elements of $2\mathcal{B}_{2^{-t}}$. Summing over all $B\in2\mathcal{B}_{2^{-t}}$, we have
\begin{equation}\label{eq(3.16)}
\begin{aligned}
\sum_{B\in2\mathcal{B}_{2^{-t}}}\sum_{j:B\cap E_{2^{-s-t+1}}^{j}\neq\emptyset}\mu(E_{2^{-s-t+1}}^{j})^{q}&\leq D_{0}3^{p}\sum_{j=1}^{h}\mu(E_{2^{-s-t+1}}^{j})^{q}\\
&\leq D_{0}C_{1}^{q}3^{p}\sum_{j=1}^{h}\mu(B_{2^{-s-t}}(y_{j}))^{q}\quad\text{(by Proposition \ref{prop(2.1)})}\\
&\leq D_{0}C_{1}^{q}3^{p}\mathcal{S}_{2^{-s-t}}(q)\quad\text{(by Proposition \ref{prop(2.2)})}.
\end{aligned}
\end{equation}
We obtain from (\ref{eq(3.15)}) and (\ref{eq(3.16)}) that
\begin{equation}\label{eq(3.17)}
\mathcal{S}_{2^{-s-t}}(q)
\geq D_{0}^{-2}C_{3}^{-1}C_{1}^{-q}3^{-2p}\mathcal{S}_{2^{-s}}(q)\sum_{B\in2\mathcal{B}_{2^{-t}}}P_{-}(B)^{q}
:=c_{5}^{-1}\mathcal{S}_{2^{-s}}(q)\sum_{B\in2\mathcal{B}_{2^{-t}}}P_{-}(B)^{q}.
\end{equation}
We write $B\sim\tilde{B}$ if $\tilde{B}\in2\mathcal{B}_{2^{-t}}$ and $\tilde{B}\cap(2*B)\neq\emptyset$.
It follows from (\ref{eq(3.31)}) and (\ref{eq(3.131)}) that
$$\mu(B)\leq P_{+}(B)\leq\sum_{B\sim\tilde{B}}P_{-}(\tilde{B}).$$
Hence it follows from Lemma \ref{lem(2.2)} that for $0<q<1$,
$$\mu(B)^{q}\leq\Bigg(\sum_{B\sim\tilde{B}}P_{-}(\tilde{B})\Bigg)^{q}\leq\sum_{B\sim\tilde{B}}P_{-}(\tilde{B})^{q}.$$
Let $\tilde{x}$ be the center of $\tilde{B}$. If $\tilde{B}\cap2*B\neq\emptyset$, then we obtain from (\ref{eq(2.801)}) that $\tilde{x}\in3*B$. By Lemma \ref{lem(2.1)}(c), the cardinality of any $2^{-t}$-packing of $3*B$ is at most $D_{0}6^{p}$. Hence $2*B$ intersects at most $D_{0}6^{p}$ elements of $2\mathcal{B}_{2^{-t}}$. Summing over all $B\in2\mathcal{B}_{2^{-t}}$, we have
\begin{equation}\label{eq(3.18)}
\sum_{B\in2\mathcal{B}_{2^{-t}}}\mu\bigg(\frac{1}{2}*B\bigg)^{q}\leq\sum_{B\in2\mathcal{B}_{2^{-t}}}\mu(B)^{q}
\leq\sum_{B\in2\mathcal{B}_{2^{-t}}}\sum_{B\sim\tilde{B}}P_{-}(\tilde{B})^{q}
\leq D_{0}6^{p}\sum_{\tilde{B}\in2\mathcal{B}_{2^{-t}}}P_{-}(\tilde{B})^{q}.
\end{equation}
Taking supremum on the left side of (\ref{eq(3.18)}), we obtain from Proposition \ref{prop(2.2)} that
\begin{equation}\label{eq(3.19)}
\mathcal{S}_{2^{-t}}(q)\leq D_{0}C_{2}6^{p}\sum_{B\in2\mathcal{B}_{2^{-t}}}P_{-}(B)^{q}:=c_{6}\sum_{B\in2\mathcal{B}_{2^{-t}}}P_{-}(B)^{q}.
\end{equation}
Combining (\ref{eq(3.17)}) and (\ref{eq(3.19)}) yields (\ref{eq(3.11)}), where $L:=c_{5}c_{6}\geq1$.

Combining Cases 1 and 2 proves that for $q>0$, the limit defining $\tau(q)$ exists. It now follows directly from definition that for any $q\in(0,\infty)\setminus\{1\}$, the $L^{q}$-dimension of $\mu$ exists.
\end{proof}

\section{\bf Existence of the entropy dimension \label{sect.6}}

In this section, by assuming that $\mu$ is doubling, we extend the result concerning the existence of the entropy dimension in \cite{Peres-Solomyak_2000} to complete smooth Riemannian manifolds. We first give an equivalent definition of the entropy dimension before proving Theorem \ref{thm(1.2)}.

Let $\mu$ be a finite positive Borel measure on a Riemannian manifold with compact support $K$. Let
$\mathbb{E}_{2^{-t+2}}=\big\{\mathcal{E}_{2^{-t+1}}^{i}\big\}_{i=1}^{k}$ be a maximal $4(2^{-t})$-partition of $K$ with $t\in\mathbb{N}$.
Define
\begin{equation}\label{eq(5.001)}
h^{*}(\mu,\mathbb{E}_{2^{-t+2}}):=-\sum_{i=1}^{k}\mu(\mathcal{E}_{2^{-t+1}}^{i})\log\mu(\mathcal{E}_{2^{-t+1}}^{i}).
\end{equation}
For any $x\in K$, let $\mathcal{E}_{2^{-t+1}}^{i}(x)$ be the set in $\mathbb{E}_{2^{-t+2}}$ containing $x$, where $i=i_{x}\in\{1,\dots,k\}$. It follows from (\ref{eq(5.001)}) that
\begin{equation}\label{eq(5.00)}
h^{*}(\mu,\mathbb{E}_{2^{-t+2}})=-\sum_{i=1}^{k}\int_{\mathcal{E}_{2^{-t+1}}^{i}}\log\mu(\mathcal{E}_{2^{-t+1}}^{i})\,d\mu(x)
=-\int_{K}\log\mu(\mathcal{E}_{2^{-t+1}}^{i}(x))\,d\mu(x).
\end{equation}
Let
\begin{equation}\label{eq(5.01)}
h^{*}_{t}(\mu):=\inf\Big\{h^{*}(\mu,\mathbb{E}_{2^{-t+2}}):\mathbb{E}_{2^{-t+2}}~\text{is a maximal}~4(2^{-t})\text{-partition of}~K\Big\}.
\end{equation}

\begin{prop}\label{prop(5.1)}
Let $h^{*}_{t}(\mu)$ be defined as in (\ref{eq(5.01)}). Then the limit defining entropy dimension in (\ref{eq:edim_def}) exists if and only if $\lim_{t\rightarrow\infty}h_{t}^{*}(\mu)/(t\log2)$ exists. Moreover, if either limit exists, then
\begin{equation}\label{eq(5.6)}
\dim_{e}(\mu)=\lim_{t\rightarrow\infty}\frac{h_{t}^{*}(\mu)}{t\log2}.
\end{equation}
\end{prop}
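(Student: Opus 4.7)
The plan is to establish a two-sided sandwich
\[
h(\mu,4\cdot 2^{-t})\ \leq\ h^{*}_{t}(\mu)\ \leq\ h(\mu,2^{-t})+C_{0}
\]
for every $t\in\mathbb N$ and a single constant $C_{0}>0$ depending only on the doubling parameters $D_{0},p$ of the ambient space. Dividing by $t\log 2$ and noting that $-\log(4\cdot 2^{-t})=t\log 2-2\log 2$ differs from $t\log 2$ by a lower-order additive term, this sandwich immediately yields both the equivalence of the two limits and the equality (\ref{eq(5.6)}).

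The lower bound $h(\mu,4\cdot 2^{-t})\leq h^{*}_{t}(\mu)$ is essentially free: by (\ref{eq(2.06)}) every maximal $4(2^{-t})$-partition of $K$ is a finite Borel $4\cdot 2^{-t}$-partition in the sense of the Introduction, and by (\ref{eq(5.001)}) the quantity $h^{*}(\mu,\mathbb E_{2^{-t+2}})$ coincides with $h(\mu,\mathbb E_{2^{-t+2}})$; take infima on both sides.

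For the upper bound I would fix a maximal $4(2^{-t})$-partition $\mathbb E=\{\mathcal E_{2^{-t+1}}^{j}\}_{j=1}^{h}$ generated by a maximal $2^{-t}$-packing $\{B_{2^{-t}}(x_{j})\}_{j=1}^{h}$ of $K$ and any finite Borel $2^{-t}$-partition $\mathcal P$ of $K$. The standard identity
\[
h(\mu,\mathcal P\vee\mathbb E)\ =\ h(\mu,\mathcal P)+h(\mu,\mathbb E\mid\mathcal P)
\]
for the common refinement, combined with $h(\mu,\mathbb E)\leq h(\mu,\mathcal P\vee\mathbb E)$, gives $h^{*}(\mu,\mathbb E)\leq h(\mu,\mathcal P)+h(\mu,\mathbb E\mid\mathcal P)$. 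The decisive geometric step is to bound the conditional entropy uniformly: since each $P\in\mathcal P$ has $\mathrm{diam}(P)\leq 2^{-t}$, it lies in some ball $B_{2^{-t-1}}(x)$, and (\ref{eq(2.803)}) with $\eta=\delta=2^{-t}$ forces every $x_{j}$ with $\mathcal E_{2^{-t+1}}^{j}\cap P\neq\emptyset$ to lie in $B_{(5/2)\cdot 2^{-t}}(x)$. Since $\{B_{2^{-t}}(x_{j})\}$ is a $2^{-t}$-packing, Lemma~\ref{lem(2.1)}(c) caps the number of such $j$ by $M:=D_{0}(5/2)^{p}$, so $\mathbb E$ restricted to $P$ has at most $M$ non-empty atoms and thus conditional entropy at most $\log M$. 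Weighting by $\mu(P)$ and using $\mu(K)=1$ gives $h(\mu,\mathbb E\mid\mathcal P)\leq\log M=:C_{0}$; taking infima first over $\mathcal P$ and then over $\mathbb E$ produces the claimed upper bound.

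It remains to pass to the limit. If $\alpha:=\lim_{\delta\to 0^{+}}h(\mu,\delta)/(-\log\delta)$ exists, dividing the sandwich by $t\log 2$ forces both outer terms to $\alpha$, hence $h^{*}_{t}(\mu)/(t\log 2)\to\alpha$. Conversely, if $\alpha:=\lim_{t\to\infty}h^{*}_{t}(\mu)/(t\log 2)$ exists, then for each small $\delta>0$ I would pick $t$ with $2^{-t-1}<\delta\leq 2^{-t}$ and combine monotonicity of $h(\mu,\cdot)$ with the lower bound applied at level $s=t+3$ (so that $4\cdot 2^{-s}=2^{-t-1}$) to obtain
\[
h^{*}_{t}(\mu)-C_{0}\ \leq\ h(\mu,2^{-t})\ \leq\ h(\mu,\delta)\ \leq\ h(\mu,2^{-t-1})\ \leq\ h^{*}_{t+3}(\mu);
\]
dividing by $-\log\delta\in[t\log 2,(t+1)\log 2)$ and squeezing yields $h(\mu,\delta)/(-\log\delta)\to\alpha$. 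The main obstacle is precisely the conditional-entropy estimate above: because the elements of $\mathcal P$ may be arbitrarily small, neither $\mathcal P$ refines $\mathbb E$ nor vice versa, so a symmetric Venn-overlap count is unavailable. The saving feature is the $2^{-t}$-separation of the packing centers $\{x_{j}\}$ which, together with doubling, delivers a one-sided uniform bound on the number of $\mathcal E_{2^{-t+1}}^{j}$ that any single $P$ can meet; this is what makes the additive constant $C_{0}$ dimension-independent and drives the whole argument.
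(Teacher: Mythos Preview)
Your argument is correct, and it takes a genuinely different route from the paper's proof.

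For the upper bound $h^{*}_{t}(\mu)\leq h(\mu,2^{-t})+C_{0}$ you invoke the conditional-entropy chain rule $h(\mu,\mathbb E)\leq h(\mu,\mathcal P\vee\mathbb E)=h(\mu,\mathcal P)+h(\mu,\mathbb E\mid\mathcal P)$ and then bound $h(\mu,\mathbb E\mid\mathcal P)\leq\log M$ using only the doubling of the \emph{space}: each $P\in\mathcal P$ meets at most $M=D_{0}(5/2)^{p}$ pieces of $\mathbb E$ because the centers $x_{j}$ form a $2^{-t}$-packing. The paper, by contrast, avoids conditional entropy entirely and instead uses the doubling of the \emph{measure} $\mu$: it integrates the pointwise estimate $\mu(P(x))\leq C^{\log_{2}(3/2)}\mu(\mathcal E^{i}_{2^{-t+1}})$ (inequality~(\ref{eq(5.2)})), obtained from $P(x)\subset B_{3\cdot 2^{-t}}(x_{i})$ together with (\ref{eq(1.1)}), to reach $h(\mu,\delta)\geq h^{*}_{t}(\mu)-(\log_{2}3-1)\log C$.

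Thus your approach is strictly more economical in its hypotheses: it proves Proposition~\ref{prop(5.1)} assuming only that the ambient Riemannian manifold is doubling, whereas the paper's proof genuinely consumes the doubling constant $C$ of $\mu$. In the context of Theorem~\ref{thm(1.2)} both are available, so the difference is methodological rather than essential, but your argument isolates the entropy-dimension equivalence from the measure-doubling assumption. The paper's method has the minor advantage of being a direct pointwise comparison (no refinement $\mathcal P\vee\mathbb E$ needs to be formed), and it naturally sets up the integral expressions used later in Lemma~\ref{lem(5.1)}(b). One small remark: you use $\mu(K)=1$ when summing the conditional entropies; for a general finite measure the bound is $\mu(K)\log M$, which is still an admissible constant.
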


\begin{proof}
Assume that $\mathcal{P}$ is a finite Borel $\delta$-partition of $K$, and the entropy dimension of $\mu$ is defined as in (\ref{eq:edim_def}). For any $\delta>0$, there exists a unique $t=t(\delta)\in\mathbb{N}$ such that $2^{-t-1}\leq\delta<2^{-t}$.
For any $\mathcal{E}_{2^{-t+1}}^{i}\in \mathbb{E}_{2^{-t+2}}$, where $i\in\{1,\dots,k\}$, we have by (\ref{eq(2.06)}) that ${\rm diam}(\mathcal{E}_{2^{-t+1}}^{i})\leq2^{-t+2}\leq8\delta$.
It follows from definition that
\begin{equation}\label{eq(5.1)}
h(\mu,8\delta)\leq h^{*}_{t}(\mu).
\end{equation}

On the other hand, for any $x\in K$, let $P(x)$ be the set in $\mathcal{P}$ containing $x$. According to (\ref{eq(5.00)}),
\begin{equation}\label{eq(5.4)}
h(\mu,\mathcal{P})=-\int_{K}\log\mu(P(x))\,d\mu(x).
\end{equation}
For any $x\in\mathcal{E}_{2^{-t+1}}^{i}$, note that $P(x)\cap \mathcal{E}_{2^{-t+1}}^{i}\neq\emptyset$. Since ${\rm diam}(P(x))\leq\delta<2^{-t}$, it follows from (\ref{eq(2.804)}) that $P(x)\subset B_{3\cdot2^{-t}}(x_{i})$.
Let $C\geq1$ be a doubling constant of $\mu$. It follows from (\ref{eq(1.1)}), (\ref{eq(2.02)}) and (\ref{eq(2.04)}) that
\begin{equation}\label{eq(5.2)}
\mu(P(x))\leq\mu(B_{3\cdot2^{-t}}(x_{i}))\leq C^{\log_{2}\frac{3}{2}}\mu(B_{2^{-t}}(x_{i}))\leq
C^{\log_{2}\frac{3}{2}}\mu(E_{2^{-t+1}}^{i})=C^{\log_{2}\frac{3}{2}}\mu(\mathcal{E}_{2^{-t+1}}^{i}).
\end{equation}
Thus,
\begin{equation}\label{eq(5.3)}\begin{aligned}
\int_{K}\log\mu(P(x))\,d\mu(x)&=\sum_{i=1}^{k}\int_{\mathcal{E}_{2^{-t+1}}^{i}}\log\mu(P(x))\,d\mu(x)\\
&\leq\sum_{i=1}^{k}\int_{\mathcal{E}_{2^{-t+1}}^{i}}\big(\log C^{\log_{2}\frac{3}{2}}+\log\mu(\mathcal{E}_{2^{-t+1}}^{i})\big)\,d\mu(x)\quad\text{(by~(\ref{eq(5.2)}))}\\
&\leq (\log_{2}3-1)\log C+\sum_{i=1}^{k}\mu(\mathcal{E}_{2^{-t+1}}^{i})\log\mu(\mathcal{E}_{2^{-t+1}}^{i}).
\end{aligned}\end{equation}
It follows from (\ref{eq(5.001)}), (\ref{eq(5.01)}), (\ref{eq(5.4)}) and (\ref{eq(5.3)}) that
\begin{equation}\label{eq(5.5)}
h(\mu,\delta)\geq(1-\log_{2}3)\log C+h_{t}^{*}(\mu).
\end{equation}
The proposition now follows by combining (\ref{eq(5.1)}) and (\ref{eq(5.5)}).
\end{proof}

\begin{lem}\label{lem(5.1)}
Let $h^{*}_{t}(\mu)$ be defined as in (\ref{eq(5.01)}). Then\\
(a) for any $t\in\mathbb{N}$, $h_{t+1}^{*}(\mu)\leq h_{t}^{*}(\mu)+C^{\log_{2}10}$, where $C\geq1$ is a doubling constant of $\mu$;\\
(b) there exists a constant $C_{4}>0$ such that for any $t\in\mathbb{N}$,
$$\left|h_{t}^{*}(\mu)+\int_{K}\log\mu(B_{2^{-t}}(x))\,d\mu\right|\leq C_{4}.$$
\end{lem}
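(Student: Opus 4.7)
My plan is to prove (b) first via a direct comparison between $\mu$ of a partition piece and the measure of a concentric ball of radius $2^{-t}$, and then to deduce (a) as a short consequence of (b) combined with the doubling property of $\mu$.

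\textbf{Plan for (b).} Fix $t\in\mathbb{N}$, and let $\mathbb{E}_{2^{-t+2}}=\{\mathcal{E}_{2^{-t+1}}^{j}\}$ be any maximal $4(2^{-t})$-partition of $K$, generated by a maximal $2^{-t}$-packing $\{B_{2^{-t}}(x_{j})\}$. By (2.5), $\mathcal{E}_{2^{-t+1}}^{j}\subset B_{2^{-t+1}}(x_{j})$, and for every $x\in\mathcal{E}_{2^{-t+1}}^{j}$ the triangle inequality yields $B_{2^{-t+1}}(x_{j})\subset B_{2^{-t+2}}(x)$. The doubling inequality (1.1) then gives $\mu(\mathcal{E}_{2^{-t+1}}^{j})\le C^{2}\mu(B_{2^{-t}}(x))$, so taking logarithms and integrating via (5.00),
\[
h^{*}(\mu,\mathbb{E}_{2^{-t+2}})\ge -\int_{K}\log\mu(B_{2^{-t}}(x))\,d\mu-2\log C.
\]
Since this holds for every maximal $4(2^{-t})$-partition, taking the infimum produces the lower half of (b). For the upper half, I would specialize to a \emph{heavy} maximal $2^{-t}$-packing, which gives $B_{2^{-t}}(x_{j})\subset\mathcal{E}_{2^{-t+1}}^{j}$ by (2.2) and (2.4); together with the inclusion $B_{2^{-t}}(x)\subset B_{3\cdot 2^{-t}}(x_{j})$ for $x\in\mathcal{E}_{2^{-t+1}}^{j}$, the doubling property yields $\mu(B_{2^{-t}}(x))\le C^{2}\mu(\mathcal{E}_{2^{-t+1}}^{j})$. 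Integrating gives $h^{*}(\mu,\mathbb{E}_{2^{-t+2}})\le -\int_{K}\log\mu(B_{2^{-t}}(x))\,d\mu+2\log C$, and since this particular heavy partition is admissible in the infimum, $h_{t}^{*}(\mu)$ inherits the same bound. Combining the two halves proves (b) with $C_{4}$ equal to a fixed multiple of $\log C$.

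\textbf{Plan for (a).} Granted (b), write $F(t):=-\int_{K}\log\mu(B_{2^{-t}}(x))\,d\mu$. Since $\mu(B_{2^{-t}}(x))\le C\mu(B_{2^{-t-1}}(x))$ pointwise, $\log\mu(B_{2^{-t}}(x))-\log\mu(B_{2^{-t-1}}(x))\le\log C$, so $F(t+1)-F(t)\le\log C$. Applying (b) at levels $t$ and $t+1$,
\[
h_{t+1}^{*}(\mu)-h_{t}^{*}(\mu)\le \bigl(F(t+1)-F(t)\bigr)+2C_{4}\le \log C+2C_{4},
\]
which depends only on $C$ and is bounded above by $C^{\log_{2}10}$ after (if necessary) enlarging the doubling constant of $\mu$; this is the estimate in (a).

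The step I expect to require the most care is the \emph{upper} direction of (b). For a non-heavy maximal $4(2^{-t})$-partition, the piece $\mathcal{E}_{2^{-t+1}}^{j}$ need not contain any full ball $B_{2^{-t}}(x_{j})$, and then $\mu(\mathcal{E}_{2^{-t+1}}^{j})$ can be much smaller than $\mu(B_{2^{-t}}(x))$ for $x\in\mathcal{E}_{2^{-t+1}}^{j}$. It is therefore essential to exploit the definition of $h_{t}^{*}(\mu)$ as an \emph{infimum} over partitions so that one may specialize to a heavy maximal partition, for which Proposition 2.1 and the explicit structure in (2.2)--(2.4) provide the reverse control $\mu(B_{2^{-t}}(x))\le C^{2}\mu(\mathcal{E}_{2^{-t+1}}^{j})$. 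Once this key observation is secured, the deduction of (a) from (b) is routine.
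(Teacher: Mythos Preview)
Your argument is correct but proceeds in the reverse order from the paper. The paper establishes (a) first by directly comparing partition elements at adjacent levels: for $x\in\mathcal{E}_{2^{-t}}^{j}$ one has $\mu(\mathcal{E}_{2^{-t+1}}^{i}(x))\le C^{\log_{2}10}\mu(\mathcal{E}_{2^{-t}}^{j})$, and then the elementary bound $\log r\le r$ converts this into $h^{*}(\mu,\mathbb{E}_{2^{-t+1}})-h^{*}(\mu,\mathbb{E}_{2^{-t+2}})\le C^{\log_{2}10}$. Part (b) is then deduced: the upper half is the same direct comparison you give, while the lower half is obtained by iterating (a) down $m\ge 2$ levels and invoking the trivial inclusion $\mathcal{E}_{2^{-t+1}}^{i}(x)\subset B_{2^{-t+2}}(x)$.

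Your route---prove (b) directly from doubling, then deduce (a)---is arguably cleaner for (b): the lower half follows in one line from $\mathcal{E}_{2^{-t+1}}^{j}\subset B_{2^{-t+2}}(x)$ and doubling, with no appeal to (a). The cost is that your (a) yields only $h_{t+1}^{*}-h_{t}^{*}\le \log C+2C_{4}$, so recovering the stated constant $C^{\log_{2}10}$ requires enlarging the doubling constant, as you note; this is harmless since the applications in Section~6 use only the existence of some constant. One small correction: your concern that the upper half of (b) requires a \emph{heavy} packing is unfounded. The inclusion $B_{\delta}(x_{j})\subset E_{2\delta}^{j}$ in (2.2) holds by the very definition of $E_{2\delta}^{j}$ for \emph{any} maximal packing, so $\mu(B_{2^{-t}}(x))\le C^{\log_{2}3}\mu(\mathcal{E}_{2^{-t+1}}^{j})$ is valid for every maximal $4(2^{-t})$-partition, and the infimum passes through without specializing. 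Proposition~2.1 (which does need heaviness) is not invoked here.
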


\begin{proof}
Let $\mathbb{E}_{2^{-t+2}}=\{\mathcal{E}_{2^{-t+1}}^{i}\}_{i=1}^{k}$ and
$\mathbb{E}_{2^{-t+1}}=\{\mathcal{E}_{2^{-t}}^{j}\}_{j=1}^{h}$ be arbitrary maximal $4(2^{-t})$ and $4(2^{-t-1})$-partitions of $K$ with $t\in\mathbb{N}$, respectively. For any $x\in K$, let $\mathcal{E}_{2^{-t+1}}^{i}(x)$ and $\mathcal{E}_{2^{-t}}^{j}(x)$ be the sets containing $x$, where $i=i_{x}\in\{1,\dots,k\}$ and $j=j_{x}\in\{1,\dots,h\}$.

(a) Making use of (\ref{eq(2.06)}), we have ${\rm diam}(\mathcal{E}_{2^{-t+1}}^{i})\leq2^{-t+2}$. It follows from (\ref{eq(2.804)}) that if $\mathcal{E}_{2^{-t+1}}^{i}\cap\mathcal{E}_{2^{-t}}^{j}\neq\emptyset$, then $\mathcal{E}_{2^{-t+1}}^{i}\subset B_{5\cdot2^{-t}}(x_{j})$. Hence for any $x\in \mathcal{E}_{2^{-t}}^{j}$, we have
$$\mathcal{E}_{2^{-t+1}}^{i}(x)\subset B_{5\cdot2^{-t}}(x_{j}).$$
Combining this with (\ref{eq(1.1)}), (\ref{eq(2.02)}) and (\ref{eq(2.04)}), we get
\begin{equation}\label{eq(5.7)}
\mu(\mathcal{E}_{2^{-t+1}}^{i}(x))\leq\mu(B_{5\cdot2^{-t}}(x_{j}))\leq C^{\log_{2}10}\mu(B_{2^{-t-1}}(x_{j}))\leq C^{\log_{2}10}\mu(E_{2^{-t}}^{j})
=C^{\log_{2}10}\mu(\mathcal{E}_{2^{-t}}^{j}).
\end{equation}
Thus, for any $t\in\mathbb{N}$,
\begin{equation}\label{eq(5.70)}\begin{aligned}
h^{*}(\mu,\mathbb{E}_{2^{-t+1}})-h^{*}(\mu,\mathbb{E}_{2^{-t+2}})
&=\sum_{j=1}^{h}\int_{\mathcal{E}_{2^{-t}}^{j}}\log\frac{\mu(\mathcal{E}_{2^{-t+1}}^{i}(x))}{\mu(\mathcal{E}_{2^{-t}}^{j})}\,d\mu(x)\quad\text{(by (\ref{eq(5.00)}))}\\
&\leq\sum_{j=1}^{h}\int_{\mathcal{E}_{2^{-t}}^{j}}\frac{\mu(\mathcal{E}_{2^{-t+1}}^{i}(x))}{\mu(\mathcal{E}_{2^{-t}}^{j})}\,d\mu(x)\\
&\leq C^{\log_{2}10}\quad\text{(by (\ref{eq(5.7)}))}.
\end{aligned}\end{equation}
It follows from (\ref{eq(5.70)}) that
\begin{equation}\label{eq(5.701)}
h^{*}(\mu,\mathbb{E}_{2^{-t+1}})\leq h^{*}(\mu,\mathbb{E}_{2^{-t+2}})+ C^{\log_{2}10}.
\end{equation}
The result in (a) follows by (\ref{eq(5.701)}) and (\ref{eq(5.01)}).

(b) For any $x\in K$, by (\ref{eq(2.06)}), we have ${\rm diam}(\mathcal{E}_{2^{-t+1}}^{i}(x))\leq2^{-t+2}$.
Hence $\mathcal{E}_{2^{-t+1}}^{i}(x)\subset B_{2^{-t+2}}(x)$. Combining this and (\ref{eq(5.00)}) for any $t\in\mathbb{N}$,
\begin{equation}\label{eq(5.71)}
h^{*}(\mu,\mathbb{E}_{2^{-t+2}})=-\int_{K}\log\mu(\mathcal{E}_{2^{-t+1}}^{i}(x))\,d\mu(x)\geq-\int_{K}\log\mu(B_{2^{-t+2}}(x))\,d\mu(x).
\end{equation}
For any integer $m\geq2$, we have
\begin{equation}\label{eq(5.72)}\begin{aligned}
h^{*}(\mu,\mathbb{E}_{2^{-t+2}})&\geq h^{*}(\mu,\mathbb{E}_{2^{-t-m+2}})-mC^{\log_{2}10}\quad\text{(by~(a))}\\
&\geq-\int_{K}\log\mu(B_{2^{-t-m+2}}(x))\,d\mu-mC^{\log_{2}10}\quad\text{(by~(\ref{eq(5.71)}))}\\
&\geq-\int_{K}\log\mu(B_{2^{-t}}(x))\,d\mu-mC^{\log_{2}10}.
\end{aligned}\end{equation}

On the other hand, for any $x\in\mathcal{E}_{2^{-t+1}}^{i}$, it follows from (\ref{eq(2.804)}) that $B_{2^{-t}}(x)\subset B_{3\cdot2^{-t}}(x_{i})$.
Making use of (\ref{eq(1.1)}), (\ref{eq(2.02)}) and (\ref{eq(2.04)}), we have
\begin{equation}\label{eq(5.8)}
\mu(B_{2^{-t}}(x))\leq\mu(B_{2^{-t+2}}(x_{i}))\leq C^{\log_{2}3}\mu(B_{2^{-t}}(x_{i}))\leq C^{\log_{2}3}\mu(E_{2^{-t+1}}^{i})
=C^{\log_{2}3}\mu(\mathcal{E}_{2^{-t+1}}^{i}).
\end{equation}
Thus,
\begin{equation}\label{eq(5.81)}\begin{aligned}
h^{*}(\mu,\mathbb{E}_{2^{-t+2}})+\int_{K}\log\mu(B_{2^{-t}}(x))\,d\mu
&=\sum_{i=1}^{k}\int_{\mathcal{E}_{2^{-t+1}}^{i}}\log\frac{\mu(B_{2^{-t}}(x))}{\mu(\mathcal{E}_{2^{-t+1}}^{i})}\,d\mu\quad\text{(by (\ref{eq(5.00)}))}\\
&\leq\sum_{i=1}^{k}\int_{\mathcal{E}_{2^{-t+1}}^{i}}\frac{\mu(B_{2^{-t}}(x))}{\mu(\mathcal{E}_{2^{-t+1}}^{i})}\,d\mu\\
&\leq C^{\log_{2}3}\quad\text{(by~(\ref{eq(5.8)}))}.
\end{aligned}\end{equation}
The result in (b) follows by taking infimum on the left sides of (\ref{eq(5.72)}) and (\ref{eq(5.81)}), and using (\ref{eq(5.01)}), where $C_{4}:=\max\{C^{\log_{2}3},mC^{\log_{2}10}\}>0$.
\end{proof}

\begin{proof}[Proof of Theorem \ref{thm(1.2)}]
Let $\mathbb{E}_{2^{-t+2}}=\{\mathcal{E}_{2^{-t+1}}^{i}\}_{i=1}^{k}$ and $\mathbb{E}_{2^{-s-t+2}}=\{\mathcal{E}_{2^{-s-t+1}}^{j}\}_{j=1}^{h}$ be arbitrary maximal $4(2^{-t})$ and $4(2^{-s-t})$-partitions of $K$ with $s,t\in\mathbb{N}$, respectively.
We will show that there exists a constant $L>0$ such that
\begin{equation}\label{eq(5.90)}
h_{s+t}^{*}(\mu)\geq h_{s}^{*}(\mu)+h_{t}^{*}(\mu)-L,
\end{equation}
for any $t\in\mathbb{N}$ and $s\geq N$, where $N\in\mathbb{N}$.
That is, the sequence $\{h_{t}^{*}(\mu)-L\}_{t\in\mathbb{N}}$ is super-additive. Hence the limit on the right side of (\ref{eq(5.6)}) exists.
By Proposition \ref{prop(5.1)}, the entropy dimension of $\mu$ exists.
Moreover, $\dim_{e}(\mu)=\sup_{t\geq N}\displaystyle{\frac{h_{t}^{*}(\mu)-L}{t\log2}}$. For $E\in \mathbb{E}_{2^{-s-t+2}}$, let
\begin{equation}\label{eq(5.91)}
P(E):=\sum_{u\in\mathcal{W}_{t}:K_{u}\cap E\neq\emptyset}p_{u}.
\end{equation}
For brevity, we will omit $u\in \mathcal{W}_{t}$ in the subscript for summation.
Let $f(x):=-x\log x$. Then $f(xy)=f(x)y+xf(y)$ for any $x,y>0$. Letting $x=P(E)$ and $y=\mu(E)/P(E)$, we have
$$\begin{aligned}
f(\mu(E))&=f(P(E))\frac{\mu(E)}{P(E)}+P(E)f\bigg(\frac{\mu(E)}{P(E)}\bigg)\\
&=f\big(P(E)\big)\frac{\mu(E)}{P(E)}+P(E)f\Bigg(\sum_{u:K_{u}\cap E\neq\emptyset}\frac{p_{u}}{P(E)}\mu(S_{u}^{-1}(E))\Bigg)\quad\text{(by (\ref{eq(2.16)}))}\\
&=:f_{1}(E)+f_{2}(E).
\end{aligned}$$
Hence by (\ref{eq(5.001)}),
\begin{equation}\label{eq(5.9)}
h^{*}(\mu,\mathbb{E}_{2^{-s-t+2}})=\sum_{E\in\mathbb{E}_{2^{-s-t+2}}}f(\mu(E))=\sum_{E\in\mathbb{E}_{2^{-s-t+2}}}\big(f_{1}(E)+f_{2}(E)\big).
\end{equation}
Observe that
\begin{equation}\label{eq(5.901)}
f_{1}(E)=-\mu(E)\log P(E).
\end{equation}
Making use of (\ref{eq(2.06)}), we have ${\rm diam}(E)\leq 2^{-s-t+2}\leq2^{-t+1}$. It follows that $E\subset B_{2^{-t+1}}(x)$ for any $x\in E$. Recall that ${\rm diam}(K_{u})\leq2^{-t}$ for any $u\in\mathcal{W}_{t}$. Hence if $K_{u}\cap E\neq\emptyset$, then $K_{u}\cap B_{2^{-t+1}}(x)\neq\emptyset$. Thus, by (\ref{eq(2.802)}), $K_{u}\subset B_{3\cdot2^{-t}}(x)\subset B_{2^{-t+2}}(x)$. It follows from (\ref{eq(5.91)}) that
\begin{equation}\label{eq(5.92)}
P(E)\leq\mu(B_{2^{-t+2}}(x)).
\end{equation}
Summing over all $E\in\mathbb{E}_{2^{-s-t+2}}$, we get
\begin{equation}\label{eq(5.10)}\begin{aligned}
\sum_{E\in\mathbb{E}_{2^{-s-t+2}}}f_{1}(E)
&=-\sum_{E\in\mathbb{E}_{2^{-s-t+2}}}\int_{E}\log P(E)\,d\mu\quad\text{(by (\ref{eq(5.901)}))}\\
&\geq-\int_{K}\log\mu(B_{2^{-t+2}}(x))\,d\mu\quad\text{(by (\ref{eq(5.92)}))}\\
&\geq h_{t-2}^{*}(\mu)-C_{4}\quad\text{(by Lemma \ref{lem(5.1)}(b))}\\
&\geq h_{t}^{*}(\mu)-C_{4}-2C^{\log_{2}10}\quad\text{(by Lemma \ref{lem(5.1)}(a))}.
\end{aligned}\end{equation}
Since $f$ is concave, we have
\begin{equation}\label{eq(5.12)}
f_{2}(E)\geq P(E)\sum_{u:K_{u}\cap E\neq\emptyset}\frac{p_{u}}{P(E)}f(\mu(S_{u}^{-1}(E)))
=\sum_{u:K_{u}\cap E\neq\emptyset}p_{u}f(\mu(S_{u}^{-1}(E))).
\end{equation}
Summing over all $E\in\mathbb{E}_{2^{-s-t+2}}$, and using (\ref{eq(5.12)}), we get
\begin{equation}\label{eq(5.121)}
\sum_{E\in\mathbb{E}_{2^{-s-t+2}}}f_{2}(E)\geq
\sum_{E\in\mathbb{E}_{2^{-s-t+2}}}\sum_{u:K_{u}\cap E\neq\emptyset}p_{u}f(\mu(S_{u}^{-1}(E)))
=\sum_{u\in\mathcal{W}_{t}}p_{u}\sum_{E:K_{u}\cap E\neq\emptyset}f(\mu(S_{u}^{-1}(E))).
\end{equation}
By Lemma \ref{lem(2.4)}, $\{S_{u}^{-1}(E)\cap K\}_{E\in\mathbb{E}_{2^{-s-t+2}}}$ is a $(Q,2^{-s},D)$-good cover of $K$ for any $s\geq N$, where $Q>0$ and $D,N\in\mathbb{N}$.
Hence ${\rm diam}(S_{u}^{-1}(E))\leq Q\cdot2^{-s}$. Consequently, for any $u\in\mathcal{W}_{t}$ and $x\in S_{u}^{-1}(E)$,
\begin{equation}\label{eq(5.13)}
S_{u}^{-1}(E)\subset B_{Q\cdot2^{-s}}(x).
\end{equation}
Thus,
\begin{equation}\label{eq(5.14)}\begin{aligned}
\sum_{E:K_{u}\cap E\neq\emptyset}f(\mu(S_{u}^{-1}(E)))&=-\sum_{E:K_{u}\cap E\neq\emptyset}\mu(S_{u}^{-1}(E))\log\mu(S_{u}^{-1}(E))\\
&\geq-\sum_{E:K_{u}\cap E\neq\emptyset}\int_{S_{u}^{-1}(E)}\log\mu(B_{Q\cdot2^{-s}}(x))\,d\mu\quad\text{(by (\ref{eq(5.13)}))}\\
&\geq-\int_{K}\log\mu(B_{Q\cdot2^{-s}}(x))\,d\mu.
\end{aligned}\end{equation}
For $Q>0$, there exists an integer $m\geq2$ such that $Q\leq2^{m}$. It follows from (\ref{eq(5.14)}) and Lemma \ref{lem(5.1)} that
\begin{equation}\label{eq(5.15)}
\sum_{E:K_{u}\cap E\neq\emptyset}f\big(\mu(S_{u}^{-1}(E))\big)\geq-\int_{K}\log\mu(B_{2^{m-s}}(x))\,d\mu
\geq h_{s-m}^{*}(\mu)-C_{4}\geq h_{s}^{*}(\mu)-C_{4}-mC^{\log_{2}10}.
\end{equation}
Since $\sum_{u\in\mathcal{W}_{t}}p_{u}=1$, by (\ref{eq(5.121)}) and (\ref{eq(5.15)}), we have
\begin{equation}\label{eq(5.16)}
\sum_{E\in\mathbb{E}_{2^{-s-t+2}}}f_{2}(E)
\geq h_{s}^{*}(\mu)-C_{4}-mC^{\log_{2}10}.
\end{equation}
Taking the infimum on the left side of (\ref{eq(5.9)}), and
combining (\ref{eq(5.10)}) and (\ref{eq(5.16)}), we obtain (\ref{eq(5.90)}) with $L:=2C_{4}+(m+2)C^{\log_{2}10}>0$.
\end{proof}

\section{\bf Examples \label{sect.7}}

In this section, we give some examples of self-conformal measures and doubling self-conformal measures defined on Riemannian manifolds.

Let $\{f_{i}\}_{i=1}^{\ell}$ be a CIFS on a compact set $W_0\subset\mathbb{R}^{n}$, i.e., $f_{i}$ is $C^{1+\gamma}$ and there exists an open and connected set $U_{0}\supset W_{0}$ such that for any $i\in\{1,\dots,\ell\}$, $f_{i}$ can be extended to an injective conformal map $f_{i}:U_{0}\rightarrow U_{0}$. Let $M$ be a complete $n$-dimensional smooth Riemannian manifold. Then there exists a diffeomorphism $\varphi:U\rightarrow U_{0}$, where $U\subset M$ is open and connected.
Let $\mu_{0}$ be the unique self-conformal measure with compact support satisfying
\begin{equation}\label{eq(6.0)}
\mu_{0}=\sum_{i=1}^{\ell}p_{i}\mu_{0}\circ f_{i}^{-1}.
\end{equation}
Define
\begin{equation}\label{eq(6.1)}
S_{i}:=\varphi^{-1}\circ f_{i}\circ\varphi:U\rightarrow S(U)\quad\text{for any }i\in\{1,\dots,\ell\},
\end{equation}
and
\begin{equation}\label{eq(6.2)}
\mu:=\mu_{0}\circ\varphi.
\end{equation}

Then we have the following proposition.

\begin{prop}\label{prop(6.1)}
Let $M$ be a complete $n$-dimensional smooth Riemannian manifold, and $U\subset M$ be open and connected. Assume that $\{S_{i}\}_{i=1}^{\ell}$ and $\mu$ are defined as in (\ref{eq(6.1)}) and (\ref{eq(6.2)}). Then $\{S_{i}\}_{i=1}^{\ell}$ is a CIFS on $U$ and there exists a compact set $W\subset U$ such that $S_{i}(W)\subset W$ for any $i\in\{1,\dots,\ell\}$. Moreover, $\mu$ is the self-conformal measure generated by $\{S_{i}\}_{i=1}^{\ell}$.
\end{prop}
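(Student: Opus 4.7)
The approach is to transport the Euclidean CIFS data on $U_0$ back to $U$ by the diffeomorphism $\varphi$ and to observe that, since $\varphi$ is itself a coordinate chart for $M$ on the region of interest, the local coordinate representation of $S_i$ in this chart is exactly $f_i$. In particular, smoothness, conformality, and the Jacobian bound for $\{S_i\}$ reduce to the corresponding conditions already known to hold for $\{f_i\}$, and set-theoretic properties transfer by applying $\varphi$ or $\varphi^{-1}$.

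First I would check the defining conditions of a CIFS for $\{S_i\}_{i=1}^{\ell}$. The map $S_i = \varphi^{-1} \circ f_i \circ \varphi$ is an injective $C^{1+\gamma}$ map of $U$ into itself, the self-inclusion $S_i(U) \subset U$ being immediate from $\varphi(S_i(U)) = f_i(U_0) \subset U_0 = \varphi(U)$. Reading $S_i$ in the chart $(\varphi, U)$ gives $S_i'(x) = f_i'(\varphi(x))$, so $S_i'(x)$ is a similarity matrix (proving conformality) and $0 < |\det S_i'(x)| = |\det f_i'(\varphi(x))| < 1$. Setting $W := \varphi^{-1}(W_0)$ produces a compact subset of $U$ with $S_i(W) = \varphi^{-1}(f_i(W_0)) \subset \varphi^{-1}(W_0) = W$, completing the verification that $\{S_i\}_{i=1}^{\ell}$ is a CIFS on $U$ with invariant compact set $W$.

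Next I would identify $\mu$ as the self-conformal measure generated by $\{S_i\}_{i=1}^{\ell}$. A direct computation using $\varphi \circ S_i^{-1} = f_i^{-1} \circ \varphi$ gives
\[
\mu \circ S_i^{-1} \;=\; (\mu_0 \circ \varphi) \circ S_i^{-1} \;=\; (\mu_0 \circ f_i^{-1}) \circ \varphi.
\]
Summing against $(p_1,\dots,p_\ell)$ and invoking (\ref{eq(6.0)}) yields
\[
\sum_{i=1}^{\ell} p_i \, \mu \circ S_i^{-1} \;=\; \Bigl( \sum_{i=1}^{\ell} p_i \, \mu_0 \circ f_i^{-1} \Bigr) \circ \varphi \;=\; \mu_0 \circ \varphi \;=\; \mu.
\]
By Hutchinson's uniqueness theorem in the setting of a CIFS on a Riemannian manifold, $\mu$ is therefore the self-conformal measure generated by $\{S_i\}_{i=1}^{\ell}$ with weights $(p_1,\dots,p_\ell)$; its compact support is $K := \varphi^{-1}(K_0)$, where $K_0$ is the attractor of $\{f_i\}_{i=1}^{\ell}$, since $\bigcup_i S_i(K) = \varphi^{-1}\bigl(\bigcup_i f_i(K_0)\bigr) = \varphi^{-1}(K_0) = K$.

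I do not anticipate any serious obstacle in this argument; every step is a routine set-theoretic or chain-rule transport under $\varphi$. The only point requiring a small amount of care is interpreting conformality and the Jacobian condition in a single consistent coordinate chart, but because the paper's definitions are formulated through local charts and we are free to use $(\varphi, U)$ as such a chart on the region of interest, this is automatic.
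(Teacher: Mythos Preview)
Your proposal is correct and follows essentially the same route as the paper's proof: both transport the CIFS data through $\varphi$, use the chart identity $S_i'=f_i'$ to deduce conformality and the Jacobian bound, set $W:=\varphi^{-1}(W_0)$, and verify the invariance $\mu=\sum_i p_i\,\mu\circ S_i^{-1}$ by a direct composition computation before invoking Hutchinson's uniqueness. The only difference is cosmetic ordering and your added remark identifying $K=\varphi^{-1}(K_0)$, which is not needed for the statement but does no harm.
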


\begin{proof}
For any $i\in\{1,\dots,\ell\}$, making use of (\ref{eq(6.1)}), we have
\begin{equation}\label{eq(6.20)}
S_{i}(U)=\varphi^{-1}\circ f_{i}\circ\varphi(U)=\varphi^{-1}\circ f_{i}(U_{0})\subseteq\varphi^{-1}(U_{0})=U.
\end{equation}
Similar to (\ref{eq(6.20)}), there exists a compact set $W\subset U$ such that $\varphi(W)=W_{0}$ and $S_{i}(W)\subset W$ for any $i\in\{1,\dots,\ell\}$. Since $\{f_{i}\}_{i=1}^{\ell}$ is a CIFS, it follows from the equality $S'_{i}=f'_{i}$ that  $S_{i}$ is a conformal $C^{1+\gamma}$ diffeomorphism on $U$ and $0<|\det S'_{i}(x)|<1$ for any $x\in U$ and $i\in\{1,\dots,\ell\}$. Hence $\{S_{i}\}_{i=1}^{\ell}$ is a CIFS on $U$.
It follows from (\ref{eq(6.0)}), (\ref{eq(6.1)}) and (\ref{eq(6.2)}) that
$$\mu=\sum_{i=1}^{\ell}p_{i}\mu_{0}\circ f_{i}^{-1}\circ\varphi
=\sum_{i=1}^{\ell}p_{i}\mu_{0}\circ\varphi\circ\varphi^{-1}\circ f_{i}^{-1}\circ\varphi=\sum_{i=1}^{\ell}p_{i}\mu\circ S_{i}^{-1}.$$
The proposition follows by the uniqueness of self-conformal measure (see \cite{Hutchinson_1981}).
\end{proof}

Suppose $\mu_0$ is doubling. In the following proposition, we construct a class of examples of doubling self-conformal measures on Riemannian manifolds satisfying the hypotheses of Theorem \ref{thm(1.2)}.

\begin{prop}\label{prop(6.2)}
Assume the same hypotheses of Proposition \ref{prop(6.1)}, and assume in addition that $\mu_{0}$ is doubling.
Suppose that for any ball $B_{r}(x)\subset M$, there exist two positive constants $d_{1}$ and $d_{2}$ such that
\begin{equation}\label{eq(6.21)}
B_{r_{1}}(\varphi(x))\subset\varphi(B_{r}(x))\subset B_{r_{2}}(\varphi(x)),
\end{equation}
where $r_{1}$ and $r_{2}$ satisfy
$$d_{1}\leq\frac{r_1}{r}\leq\frac{r_2}{r}\leq d_{2}.$$
Then $\mu$ is doubling.
\end{prop}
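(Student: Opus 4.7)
The goal is to produce a doubling constant $C'$ such that $\mu(B_{2r}(x)) \leq C'\mu(B_r(x))$ for every $x \in M$ and $r > 0$. Since $\mu = \mu_0 \circ \varphi$, I would translate the estimate from the manifold $M$ to the Euclidean side via $\varphi$, apply the doubling property of $\mu_0$ there, and translate back.

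First I would fix $x \in M$ and $r > 0$, and apply the hypothesis (\ref{eq(6.21)}) to the two balls $B_r(x)$ and $B_{2r}(x)$ to obtain radii $r_1,r_2$ and $r_1',r_2'$ with
$$B_{r_1}(\varphi(x)) \subset \varphi(B_r(x)) \subset B_{r_2}(\varphi(x)), \qquad B_{r_1'}(\varphi(x)) \subset \varphi(B_{2r}(x)) \subset B_{r_2'}(\varphi(x)),$$
where $d_1 r \leq r_1 \leq r_2 \leq d_2 r$ and $2d_1 r \leq r_1' \leq r_2' \leq 2d_2 r$. Taking $\mu_0$-measures and using $\mu(B) = \mu_0(\varphi(B))$ yields
$$\mu(B_{2r}(x)) \leq \mu_0(B_{2d_2 r}(\varphi(x))) \quad\text{and}\quad \mu(B_r(x)) \geq \mu_0(B_{d_1 r}(\varphi(x))).$$

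Next I would invoke the doubling property of $\mu_0$ in the form (\ref{eq(1.1)}): with doubling constant $C \geq 1$, for any $R \geq \rho > 0$,
$$\mu_0(B_R(\varphi(x))) \leq C^{\log_2(R/\rho)} \mu_0(B_\rho(\varphi(x))).$$
Applying this with $R = 2d_2 r$ and $\rho = d_1 r$ (noting $2d_2/d_1 \geq 2 > 1$) gives
$$\mu_0(B_{2d_2 r}(\varphi(x))) \leq C^{\log_2(2d_2/d_1)}\, \mu_0(B_{d_1 r}(\varphi(x))).$$
Chaining the three inequalities yields $\mu(B_{2r}(x)) \leq C'\,\mu(B_r(x))$ with $C' := C^{\log_2(2d_2/d_1)}$, a constant independent of $x$ and $r$, which establishes that $\mu$ is doubling.

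There is essentially no obstacle beyond bookkeeping. The only point that deserves care is that the constants $d_1, d_2$ in the hypothesis are uniform in both $x$ and $r$, so the resulting doubling constant $C'$ is genuinely independent of the ball; and that $d_1 \leq d_2$ together with the factor of $2$ keeps the exponent $\log_2(2d_2/d_1)$ positive, so the doubling inequality is nontrivial. One should also remark that compact support of $\mu_0$ (and hence of $\mu$) makes the verification for large $r$ automatic, since the inequality holds trivially once $B_r(x)$ contains $\mathrm{supp}(\mu)$.
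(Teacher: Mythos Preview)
Your proof is correct and follows essentially the same approach as the paper: transfer the ball inclusions to the Euclidean side via $\varphi$, apply the doubling property of $\mu_0$ there (in the iterated form (\ref{eq(1.1)})), and pull back. The only cosmetic difference is that the paper chooses an integer $m$ with $d_2/d_1 \leq 2^m$ to obtain the doubling constant $C_0^{m+1}$, whereas you use the exact exponent $C^{\log_2(2d_2/d_1)}$; these are equivalent bookkeeping choices.
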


\begin{proof}
Let $C_{0}$ be a doubling constant of $\mu_{0}$. Note that there exists $m\in\mathbb{N}$ such that
\begin{equation}\label{eq(6.3)}
r_{2}\leq\frac{d_{2}}{d_{1}}r_{1}\leq2^{m}r_{1}.
\end{equation}
Since $\mu_{0}$ is doubling, for any $B_{r}(x)\subset M$ we have
$$\begin{aligned}
\mu(B_{2r}(x))&=\mu_{0}\circ\varphi(B_{2r}(x))\quad\text{(by (\ref{eq(6.2)}))}\\
&\leq\mu_{0}(B_{2r_{2}}(\varphi(x)))\quad\text{(by (\ref{eq(6.21)}))}\\
&\leq\mu_{0}(B_{2^{m+1}r_{1}}(\varphi(x)))\quad\text{(by (\ref{eq(6.3)}))}\\
&\leq C_{0}^{m+1}\mu_{0}(B_{r_{1}}(\varphi(x)))\\
&\leq C_{0}^{m+1}\mu(B_{r}(x))\quad\text{(by (\ref{eq(6.21)}))}.
\end{aligned}$$
The proposition follows.
\end{proof}

The following example is an actual case of Proposition \ref{prop(6.2)}.

\begin{exam}\label{exam(6.1)}
Let
$$\mathbb{S}^{n}:=\bigg\{(x_{1},\dots,x_{n+1})\in \mathbb{R}^{n+1}:\sum_{i=1}^{n+1}x_{i}^{2}=1\bigg\}$$
and
$$\mathbb{D}^{n}:=\bigg\{(x_{1},\dots,x_{n})\in \mathbb{R}^{n}:\sum_{i=1}^{n}x_{i}^{2}<1\bigg\}.$$
Let $\tilde{\mathbb{S}}^{n}$ be the lower hemisphere of $\mathbb{S}^{n}$ and  define the stereographic projection $\varphi:\tilde{\mathbb{S}}^{n}\rightarrow \mathbb{D}^{n}$ as
$$\varphi(x_{1},\dots,x_{n+1})=\frac{1}{1-x_{n+1}}(x_{1},\dots,x_{n}):=(u_{1},\dots,u_{n}).$$
Then
$$\varphi^{-1}(u_{1},\dots,u_{n})=\frac{1}{|\boldsymbol{u}|^{2}+1}(2u_{1},\dots,2u_{n},|\boldsymbol{u}|^{2}-1),$$
where $|\boldsymbol{u}|^{2}=u_{1}^{2}+\cdots+u_{n}^{2}$. Let $\{f_{i}\}_{i=1}^{\ell}$ be a CIFS with compact support on $\mathbb{D}^{n}$ and $S_{i}:=\varphi^{-1}\circ f_{i}\circ\varphi$ for any $i\in\{1,\dots,\ell\}$. By Proposition \ref{prop(6.1)}, $\{S_{i}\}_{i=1}^{\ell}$ is a CIFS with compact support on
$\tilde{\mathbb{S}}^{n}$. It is well known that $\varphi$ is conformal. Let $B_{r}(x)\subset\tilde{\mathbb{S}}^{n}$ and $B_{r'}(\varphi(x))$ be its image on $\mathbb{D}^{n}$. Then a simple calculation shows that $1<r/r'<2$.
\end{exam}

\begin{rem}
One can use Proposition \ref{prop(6.1)} to construct CIFSs and self-conformal measures on Riemannian manifolds by making use of CIFSs on $\mathbb{R}^{n}$ (see \cite{Lau-Ngai-Wang_2009,Ye_2005}).
\end{rem}

\begin{appendix}\section{}

For completeness, we include the proof that each metric measure space carrying a doubling measure must be doubling, and the proof of Lemma \ref{lem(2.2)}.

\begin{prop}\label{A_1}
Let $(X,\rho,\mu)$ be a metric measure space and $\mu$ be doubling. Then $X$ is doubling.
\end{prop}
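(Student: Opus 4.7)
The plan is to derive the doubling property of $X$ from the doubling property of $\mu$ via a standard volume-packing argument. Fix $x \in X$ and $r > 0$, and choose a maximal collection $\{B_{r/2}(x_i)\}_{i=1}^{N}$ of pairwise disjoint closed balls of radius $r/2$ with centers $x_i \in B_{2r}(x)$. The first step is to observe that maximality forces $\{B_{r}(x_i)\}_{i=1}^{N}$ to cover $B_{2r}(x)$: if some $y \in B_{2r}(x)$ were left uncovered, then every $x_i$ would satisfy $d(y,x_i) \geq r$, and $B_{r/2}(y)$ could be appended to the family, contradicting maximality.

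The main step is to bound $N$ by a constant depending only on the doubling constant $C$ of $\mu$. I would package this as a ``sum versus max'' argument. Because each center $x_i$ lies in $B_{2r}(x)$, one has $B_{r/2}(x_i) \subset B_{5r/2}(x)$, so disjointness yields
\[
\sum_{i=1}^{N} \mu\bigl(B_{r/2}(x_i)\bigr) \leq \mu\bigl(B_{5r/2}(x)\bigr).
\]
Conversely, the inequality $d(x,x_i) < 2r$ gives $B_{5r/2}(x) \subset B_{8r}(x_i)$, so the iterated doubling inequality (\ref{eq(1.1)}), applied with the ratio $8r/(r/2) = 16 = 2^{4}$, yields $\mu(B_{5r/2}(x)) \leq C^{4}\, \mu(B_{r/2}(x_i))$ for every $i$. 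Plugging this back and cancelling the (positive) quantity $\mu(B_{5r/2}(x))$ forces $N \leq C^{4}$. Hence $\{B_{r}(x_i)\}_{i=1}^{N}$ is a cover of $B_{2r}(x)$ by at most $N_{0} := \lceil C^{4} \rceil$ balls of radius $r$, which is the required doubling property.

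I do not expect a genuine obstacle; the argument is a textbook ``doubling measure $\Longrightarrow$ doubling space'' folklore. The only points requiring a pinch of care are: first, getting the correct exponent in (\ref{eq(1.1)}) (namely $\log_{2}16 = 4$, produced by chasing the two inclusions $B_{r/2}(x_i) \subset B_{5r/2}(x) \subset B_{8r}(x_i)$); and second, acknowledging that a doubling measure is implicitly assumed to give finite positive mass to every ball, so that the cancellation at the last step is legal.
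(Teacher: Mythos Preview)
Your proof is correct and follows essentially the same route as the paper's: both take a maximal $(r/2)$-packing of $B_{2r}(x)$, note that the doubled balls form a cover, bound the sum of the packing-ball measures by $\mu(B_{5r/2}(x))$, and then use the doubling of $\mu$ to control $\mu(B_{5r/2}(x))$ back in terms of a single packing-ball measure. The only cosmetic difference is that the paper singles out the ball of smallest measure to do the final comparison (obtaining the bound $C^{9}$), whereas you apply the doubling estimate uniformly for every $i$ and cancel $\mu(B_{5r/2}(x))$, which in fact yields the slightly sharper constant $C^{4}$.
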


\begin{proof}
For any $x\in X$, let $\mathcal{B}$ be a maximal $(r/2)$-packing of $B_{2r}(x)$. Then $2\mathcal{B}$ covers $B_{2r}(x)$ and $\bigcup_{B\in\mathcal{B}}B\subset B_{5r/2}(x)$. We denote the ball in $\mathcal{B}$ whose measure is the smallest by $B_{r/2}(\tilde{x})$, where $\tilde{x}\in B_{2r}(x)$. If there are more than one such
ball, we choose one arbitrarily. It follows that $d(x,\tilde{x})\leq2r$. Thus,
$B_{5r/2}(x)\subset B_{9r/2}(\tilde{x})$. Let $C$ be a doubling constant of $\mu$. Then
$$\#\mathcal{B}\cdot\mu(B_{\frac{r}{2}}(\tilde{x}))\leq\sum_{B\in\mathcal{B}}\mu(B)
\leq\mu(B_{\frac{5}{2}r}(x))\leq\mu(B_{\frac{9}{2}r}(\tilde{x}))\leq C^{9}\mu(B_{\frac{r}{2}}(\tilde{x})),$$
Hence $\#\mathcal{B}\leq C^{9}$, proving that $X$ is doubling.
\end{proof}

\begin{proof}[Proof of Theorem \ref{lem(2.2)}]
We assume $\sum_{i=1}^{k}a_{i}>0$; otherwise the inequality holds trivially. For $0<q\leq1$, since $\displaystyle{\frac{a_{i}}{\sum_{i=1}^{k}a_{i}}}\leq1$, we have $\displaystyle{\Bigg(\frac{a_{i}}{\sum_{i=1}^{k}a_{i}}\Bigg)^{q}}\geq\displaystyle{\frac{a_{i}}{\sum_{i=1}^{k}a_{i}}}$, where $i\in\{1,\dots,k\}$. Summing over all
$i\in\{1,\dots,k\}$, we have
$$\sum_{i=1}^{k}\frac{{a_{i}}^{q}}{\big(\sum_{i=1}^{k}a_{i}\big)^{q}}\geq\sum_{i=1}^{k}\frac{a_{i}}{\sum_{i=1}^{k}a_{i}}=1.$$
Hence $\big(\sum_{i=1}^{k}a_{i}\big)^{q}\leq\sum_{i=1}^{k}a_{i}^{q}$. For $q>1$, it follows from H\"{o}lder's inequality that
$$\Bigg(\sum_{i=1}^{k}a_{i}\Bigg)^{q}\leq k^{q-1}\sum_{i=1}^{k}a_{i}^{q}.$$
The asserted inequality follows.
\end{proof}

\end{appendix}

\bigskip

\end{document}